\newcommand{\R}{\ensuremath{\mathbb{R}}}
\newcommand{\N}{\ensuremath{\mathbb{N}}}
\newcommand{\Z}{\ensuremath{\mathbb{Z}}}
\DeclareMathOperator{\Tr}{Tr}
\newcommand{\LT}[2]{L_{#1, #2}^{\mathrm{cl}}}
\newcommand{\eps}{\ensuremath{\varepsilon}}
\newcommand{\pps}{\hspace{0.75pt}}
\newtheorem{theorem}{Theorem}[section]
\newtheorem{definition}{Definition}
\newtheorem{lemma}[theorem]{Lemma}
\newtheorem{proposition}[theorem]{Proposition}
\numberwithin{theorem}{section}
\numberwithin{definition}{section}
\theoremstyle{remark}
\newtheorem{remark}[theorem]{Remark}
\newcommand{\limplus}{{\mathchoice{\vcenter{\hbox{$\scriptstyle +$}}}
  {\vcenter{\hbox{$\scriptstyle +$}}}
  {\vcenter{\hbox{$\scriptscriptstyle +$}}}
  {\vcenter{\hbox{$\scriptscriptstyle +$}}}
}}
\newcommand{\limminus}{{\mathchoice{\vcenter{\hbox{$\scriptstyle -$}}}
  {\vcenter{\hbox{$\scriptstyle -$}}}
  {\vcenter{\hbox{$\scriptscriptstyle -$}}}
  {\vcenter{\hbox{$\scriptscriptstyle -$}}}
}}
\newcommand{\limpm}{{\mathchoice{\vcenter{\hbox{$\scriptstyle \pm$}}}
  {\vcenter{\hbox{$\scriptstyle \pm$}}}
  {\vcenter{\hbox{$\scriptscriptstyle \pm$}}}
  {\vcenter{\hbox{$\scriptscriptstyle \pm$}}}
}}
\def\myarXiv#1#2{\href{http://arxiv.org/abs/#1}{\texttt{arXiv:#1 \hspace{-5pt}[#2]}}}
\newcommand{\optD}{{\scalebox{0.6}{$\mathcal{D}$}}}
\newcommand{\optN}{{\scalebox{0.6}{$\mathcal{N}$}}}
\begin{document}

\title[Asymptotic behaviour of cuboids optimising  Laplacian eigenvalues]{Asymptotic behaviour of cuboids optimising Laplacian eigenvalues}

\author[K. Gittins]{Katie GITTINS}
\address{Universit\'e de Neuch\^atel, Institut de Math\'ematiques, Neuch\^atel, Switzerland}
\email{katie.gittins@unine.ch}

\author[S. Larson]{Simon LARSON}
\address{KTH Royal Institute of Technology, Department of Mathematics, Stockholm, Sweden}
\email{simla@math.kth.se}

\subjclass[2010]{35J20, 35P99}
\keywords{Spectral optimisation, Laplacian, eigenvalues, cuboids, asymptotics}
\date{\today.}

\begin{abstract}
We prove that in dimension $n \geq 2$, within the collection of unit-measure cuboids in $\R^n$ (i.e.\ domains of the form $\prod_{i=1}^{n}(0, a_n)$), any sequence of minimising domains $R_k^\optD$ for the Dirichlet eigenvalues $\lambda_k$ converges to the unit cube as $k \rightarrow \infty$. Correspondingly we also prove that any sequence of maximising domains $R_k^\optN$ for the Neumann eigenvalues $\mu_k$ within the same collection of domains converges to the unit cube as $k\to \infty$.
For $n=2$ this result was obtained by Antunes and Freitas in the case of Dirichlet eigenvalues and van den Berg, Bucur and Gittins for the Neumann eigenvalues. The Dirichlet case for $n=3$ was recently treated by van den Berg and Gittins.

In addition we obtain stability results for the optimal eigenvalues as $k \to \infty$. We also obtain corresponding shape optimisation results for the Riesz means of eigenvalues in the same collection of cuboids. For the Dirichlet case this allows us to address the shape optimisation of the average of the first $k$ eigenvalues.
\end{abstract}
\maketitle

\section{Introduction}\label{intro}

Let $\Omega \subset \R^n$, $n\geq 2$, be an open set with finite Lebesgue measure $\vert \Omega \vert < \infty$. Then the spectrum of the Dirichlet Laplace operator $-\Delta^\mathcal{D}_{\Omega}$ acting on $L^2(\Omega)$ is discrete and its eigenvalues can be written in a non-decreasing sequence, repeating each eigenvalue according to its multiplicity,
\begin{equation}\label{ei0}
\lambda_1(\Omega) \leq \lambda_2(\Omega) \leq \dots \leq \lambda_k(\Omega) \leq \dots,
\end{equation}
with $\lambda_1(\Omega) > 0$. Moreover, the sequence accumulates only at infinity.

If in addition the boundary of $\Omega$ is Lipschitz regular, then
the spectrum of the Neumann Laplace operator $-\Delta_\Omega^\mathcal{N}$ is
discrete and its eigenvalues can be written in a non-decreasing sequence, repeating each eigenvalue according to its multiplicity,
\begin{equation}
  0=\mu_0(\Omega) \leq \mu_1(\Omega) \leq \ldots \leq \mu_k(\Omega) \leq \ldots
\end{equation}
Again the sequence accumulates only at infinity.

\subsection{Optimising Laplacian eigenvalues with a measure constraint.}
For $k \in \N$ and fixed $c >0$, the existence
of sets $\Omega_k^\optD$ and $\Omega_k^\optN$ which realise the infimum respectively the supremum
in the optimisation problems
\begin{equation}\label{ei0a}
\begin{aligned}
\lambda_k(\Omega_k^\optD) &= \inf\{\lambda_k(\Omega) : \Omega \subset \R^n \mbox{ open}, \vert \Omega \vert = c\},\\
\mu_k(\Omega_k^\optN) &= \sup\{\mu_k(\Omega) : \Omega \subset \R^n\mbox{ open and Lipschitz}, \vert \Omega \vert = c\}
\end{aligned}
\end{equation}
has received a great deal of attention throughout the last century.

It was shown by Faber~\cite{Fa} in $\R^2$ and Krahn~\cite{Kr,Kr2} in any dimension that the first Dirichlet eigenvalue is minimised by the ball of measure $c$. Furthermore, Krahn~\cite{Kr2} proved that the disjoint union of two balls each of measure $\frac{c}{2}$ minimises $\lambda_2$. In the Neumann case it was shown by Szeg\H o~\cite{S54} and Weinberger~\cite{W56} that the ball of measure $c$ maximises $\mu_1$. Girouard, Nadirashvili and Polterovich~\cite{GNP} proved that amongst all bounded, open, planar, simply connected sets of area $c$, the maximum of $\mu_2$ is realised by a sequence of sets which degenerates to the disjoint union of two discs each of area $\frac{c}{2}$.

For $k \geq 3$, it is known that a minimiser of $\lambda_k$ exists in the collection of quasi-open sets, see~\cite{B1, MP}. But whether these minimisers are open is currently unresolved. In general minimisers of $\lambda_3$ are not known to date, but there are some conjectures for them, for example see~\cite{H, OV}. In the plane, and with $k \geq 5$, it is known that neither a disc nor a disjoint union of discs minimises $\lambda_k$~\cite{Be}. In addition, for some values of $k \geq 3$, numerical evidence suggests that minimisers of $\lambda_k$ might not have any natural symmetries, see~\cite{AF1}.

In the Neumann case the existence of a maximising set which realises the above supremum remains open to date (see, for example,~\cite[Subsection~7.4]{BB05}).

\subsection{Asymptotic shape optimisation}
An idea brought forward by Antunes and Freitas~\cite{MR3001382} was to consider the behaviour of minimisers of $\lambda_k$ at the other end of the spectrum. That is,
for a collection of sets in which a minimiser $\Omega_k^\optD$ of $\lambda_k$ exists for all $k \in \N$,
to determine the limiting shape of a sequence of minimising sets $(\Omega_k^{\optD})_k$ as $k \rightarrow \infty$.
Analogously, if a maximiser $\Omega_k^\optN$ of $\mu_k$ exists in some collection of sets, then
one can consider the asymptotic behaviour of a sequence of maximising sets $(\Omega_k^{\optN})_k$ as $k \rightarrow \infty$.

It was shown in~\cite{CoSo} that the statement that $\lambda_k(\Omega_k^\optD)$ resp.\ $\mu_k(\Omega_k^\optN)$ is asymptotically equal to $4\pi^2 \omega_n^{-2/n} (\tfrac kc)^{2/n}$ as $k \rightarrow \infty$, where $\omega_n$ is the measure of the unit ball in $\R^n$, is equivalent to P\'olya's conjecture: for $k \in \N$ and any bounded, open set $\Omega \subset \R^n$ of measure $c$,
\begin{equation}\label{eq:Polya_conj}
\begin{aligned}
\lambda_k(\Omega)&\geq 4\pi \Gamma\Bigl(\frac{n}{2}+1\Bigr)^{2/n} \Bigl(\frac{k}{c}\Bigr)^{2/n},\\[2pt]
\mu_k(\Omega)&\leq 4\pi \Gamma\Bigl(\frac{n}{2}+1\Bigr)^{2/n} \Bigl(\frac{k}{c}\Bigr)^{2/n}.
\end{aligned}
\end{equation}
Note that the right-hand side of~\eqref{eq:Polya_conj} is precisely the quantity we would like to find as $k\to \infty$, since $4\pi^2\omega_n^{-2/n}=4\pi \Gamma(\tfrac n2+1)^{2/n}$.
These inequalities were shown to hold for tiling domains by P\'olya~\cite{PO}, see also~\cite{Kel66}. In particular, they hold for $\Omega = \prod_{i=1}^n(0, a_i)$.

In~\cite{MR3001382}, it was shown that amongst all planar rectangles of unit area, any sequence
of minimising rectangles for $\lambda_k$ converges to the unit square as $k \rightarrow \infty$. In~\cite{vdBBG} it was shown that the corresponding result holds in the Neumann case. Furthermore, the analogous result for the Dirichlet eigenvalues in three dimensions was proven in~\cite{vdBergGittins}. That is, amongst all cuboids in $\R^3$ of unit volume, any sequence of cuboids minimising $\lambda_k$ converges to the unit cube as $k \rightarrow \infty$.
For the Dirichlet eigenvalues, it was conjectured in~\cite{AL} that the analogous result also holds in dimensions $n \geq 4$, and some support for this conjecture was obtained there (see \cite[Section~2]{AL}). Similar arguments also suggest that the corresponding result holds for the Neumann eigenvalues in dimensions $n \geq 3$ (by invoking \cite[Theorem~4]{AL} instead of \cite[Theorem~1]{AL}).

The goal of this paper is
to generalise the results of~\cite{MR3001382,vdBBG,vdBergGittins} to arbitrary dimensions. To that end,
throughout the paper we let $R=R_{a_1,\ldots,a_n}$ denote an $n$-dimensional cuboid of unit measure, that is a domain of the form $\prod_{i=1}^n(0, a_i)\subset \R^n$
where $a_{1}, \dots, a_{n} \in \R_\limplus$ are such that  $\prod_{i=1}^{n}a_{i}=1$. Without loss of generality we will always label the $a_i$ so that $a_{1} \leq \dots \leq a_{n}.$ Moreover, we let $Q$ denote the $n$-dimensional unit cube.

For $k \in \N$,
$\lambda_k(R)$ and $\mu_k(R)$ obey the two-term asymptotic formulae
\begin{equation}
  \begin{aligned}\label{ei1}
  \lambda_{k}(R) &= 4\pi\Gamma\Bigl(\frac{n}{2} + 1\Bigr)^{2/n}k^{2/n} + \frac{2\pi \Gamma(\frac{n}{2}+1)^{1+1/n}}{n\Gamma(\frac{n+1}{2})}
  |\partial R| k^{1/n} +o(k^{1/n}),\\
  \mu_{k}(R) &= 4\pi\Gamma\Bigl(\frac{n}{2} + 1\Bigr)^{2/n}k^{2/n} - \frac{2\pi \Gamma(\frac{n}{2}+1)^{1+1/n}}{n\Gamma(\frac{n+1}{2})}
  |\partial R| k^{1/n} +o(k^{1/n}),
  \end{aligned}
\end{equation}
as $k\to \infty$ (see~\cite{VI} or Section~\ref{count}).
Here, and in what follows, $|\partial R|$ denotes the perimeter of $R$. Corresponding two-term asymptotic formulae were conjectured by Weyl for more general domains $\Omega\subset \R^n$, and under certain regularity assumptions the conjecture was proven by Ivrii in~\cite{VI}.

Since the cube in $\R^n$ has smallest perimeter in the collection of $n$-dimensional cuboids,~\eqref{ei1}
suggests that the cube is the limiting domain of a sequence of optimising cuboids in this collection as $k \to \infty$.
However, this argument does not provide a proof as we are not considering a fixed cuboid $R$ and then letting $k \rightarrow \infty$. The minimising or maximising cuboids themselves depend upon $k$ (see, for instance,~\cite{AL}).

\subsection{Eigenvalues of cuboids}\label{sec:Eigenvalues of cuboids}

For a cuboid $R$ as above, the Laplacian eigenvalues are given by
\begin{equation}\label{ei2}
 \frac{\pi^2 i_1^2}{a_1^2} + \frac{\pi^2 i_2^2}{a_2^2} + \cdots + \frac{\pi^2 i_n^2}{a_n^2},
\end{equation}
where $i_1, \ldots, i_n$ are positive integers in the Dirichlet case and non-negative integers in the Neumann case.

From~\eqref{ei2} we see that a minimising cuboid of unit measure for $\lambda_k$, $k \in \N$, must exist.
Indeed, as in~\cite{vdBergGittins}, we consider a minimising sequence for $\lambda_k$ where one side-length is blowing up. Then another side-length must be shrinking in order to preserve the measure constraint. However, this shrinking side would give rise to large eigenvalues, whilst for the unit cube $Q$ we have that $\lambda_k(Q) \leq n \pi^2 k^2 < \infty$, contradicting the minimality of the sequence. To emphasise the optimality, when referring to a cuboid which minimises $\lambda_k$ we will write $R^\optD_k$ and denote its side-lengths by $a_{1,k}^*, \ldots, a_{n, k}^*$.

Similarly we see that a maximising cuboid of unit measure for $\mu_k$, $k \in \N$, exists.
As in~\cite{vdBBG}, if $(R_{\ell})_{\ell\in \N}=(R_{a_1^{\ell}, \ldots, a_n^{\ell}})_{\ell\in \N}$ is a maximising sequence for $\mu_k$ with $a_n^{\ell}\to \infty$ as $\ell \to \infty$, then for sufficiently large $\ell$
\begin{equation}
  \mu_k(R_\ell) \leq \frac{\pi^2k^2}{(a_n^{\ell})^2}
\end{equation}
and so $\mu_k(R_\ell)\to 0$ as $\ell \to \infty$. For the unit cube $Q$ we have that $\mu_k(Q) > \pi^2$, contradicting the maximality of the sequence. When referring to a cuboid which maximises $\mu_k$ we will write $R^\optN_k$ and denote its side-lengths by $a_{1,k}^*, \ldots, a_{n, k}^*$.

\subsection{Main results}
Before we state our results we need the following definition which plays a central role in what follows.
\begin{definition}\label{def1}
For $n \geq 2$, define $\theta_n$ as any exponent such that for all $a_1, \ldots, a_n \in \R_\limplus$,
\begin{equation}\label{def:theta_n}
\#\{z \in \Z^n : a_1^{-2}z_1^2 + \ldots + a_n^{-2} z_n^2 \leq t^2\} - \omega_n t^n \prod_{i=1}^n a_i = O(t^{\theta_n}), \quad \mbox{as } t \rightarrow \infty,
\end{equation}
uniformly for $a_i$ on compact subsets of\/ $\R_\limplus$.
\end{definition}

Geometrically $\theta_n$ describes the asymptotic order of growth of the difference between the number of integer lattice points in the ellipsoid $a_1^{-2}x_1^2+\ldots +a_n^{-2}x_n^2\leq t^2$ and its volume. Finding the optimal order of growth in the case $n=2$ and $a_1=a_2=1$ is the well-known, and still open, Gauss circle problem (see~\cite{IKKN04} and references therein).

If~\eqref{def:theta_n} is not required to hold uniformly for different $a_i$, then estimates for $\theta_n$ are well-known (see, for instance,~\cite{HB,H03,IKKN04}). However, with the additional requirement of a uniform remainder term the literature is less extensive. For $n\geq 5$, $\theta_n=n-2$ is known to hold and to be optimal~\cite{Gotze}. As far as the authors are aware, the smallest known value, for $n=3, 4$, is $\theta_n=\frac{n(n-1)}{n+1}$ which is due to Herz~\cite{Herz}. For $n=2$ it holds that $\theta_2\leq \tfrac{46}{73}+\eps$, for any $\eps>0$, due to Huxley~\cite{H96}. In all dimensions, $\theta_n < n-1$.

The main aim of this paper is to prove the following theorems, and thereby extend the results of~\cite{MR3001382, vdBergGittins} and~\cite{vdBBG} to all dimensions.

\begin{theorem}\label{T1}
Let $n \geq 2$. For $k \in \N$, let $R_k^\optD$ denote an $n$-dimensional unit-measure cuboid which minimises $\lambda_k$. Then, as $k \rightarrow \infty$, we have that
\begin{equation}
a_{n, k}^* =
1 + O(k^{(\theta_n-(n-1))/(2n)}),
\end{equation}
where $\theta_n$ is as defined in~\eqref{def:theta_n}.
That is, any sequence of minimising $n$-dimensional cuboids $(R_k^\optD)_k$ for $\lambda_k$ converges to the $n$-dimensional unit cube as $k \rightarrow \infty$.
\end{theorem}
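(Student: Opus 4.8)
The plan is to combine the two-term asymptotic formula \eqref{ei1} for $\lambda_k(R_k^\optD)$ with a sharp \emph{upper} bound for $\lambda_k$ on a cuboid close to the cube, and a sharp \emph{lower} bound on a cuboid far from the cube, using the explicit lattice-point description \eqref{ei2} together with Definition~\ref{def1}. First I would record the counting function identity: for a cuboid $R=R_{a_1,\dots,a_n}$ and $t>0$, the Dirichlet counting function $N^\optD_R(t^2)=\#\{i\in\N^n : \pi^2\sum a_i^{-2}i_j^2\le t^2\}$ differs from $\frac{\omega_n}{2^n}\frac{t^n}{\pi^n}\prod a_i$ by a boundary term of order $t^{n-1}$ and an error of order $t^{\theta_n}$; indeed, reflecting the lattice one gets $2^n N^\optD_R(t^2) = \#\{z\in\Z^n : \sum a_i^{-2}z_j^2\le (t/\pi)^2\} - (\text{lower-dimensional sublattice counts})$, and the leading correction to the volume term is exactly $-\frac12$ times the sum over $i$ of the $(n-1)$-dimensional lattice-point counts in the faces, which produces the perimeter term $\frac{\omega_{n-1}}{2^{n-1}}\frac{t^{n-1}}{\pi^{n-1}}\cdot\frac12|\partial R|$ plus $O(t^{\theta_{n-1}})=O(t^{n-2})$. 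Inverting this relation gives the two-term eigenvalue asymptotics \eqref{ei1} but now \emph{with an explicit error term} $O(k^{\theta_n/n})$ (plus the perimeter term), \emph{uniformly} for the side-lengths in a compact subset of $\R_\limplus^n$ — this uniformity is precisely what Definition~\ref{def1} was set up to provide, and it is the crucial improvement over just citing \cite{VI}.

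Next I would run the comparison argument. Since $R_k^\optD$ minimises $\lambda_k$, we have $\lambda_k(R_k^\optD)\le \lambda_k(Q)$. Applying the uniform two-term formula to $R_k^\optD$ gives
\begin{equation*}
\lambda_k(R_k^\optD) = 4\pi\Gamma\Bigl(\tfrac n2+1\Bigr)^{2/n}k^{2/n} + c_n|\partial R_k^\optD|\,k^{1/n} + O(k^{\theta_n/n}),
\end{equation*}
with $c_n=\frac{2\pi\Gamma(n/2+1)^{1+1/n}}{n\Gamma((n+1)/2)}>0$, and the same with $|\partial Q|=2n$ for $Q$. Before invoking this I must know that $R_k^\optD$ \emph{stays in a compact subset} of $\R_\limplus^n$; this follows from a soft a~priori argument as in \cite{vdBergGittins}: if $a_{n,k}^*\to\infty$ along a subsequence then some $a_{j,k}^*\to 0$, and the one-dimensional eigenvalue $\pi^2/(a_{j,k}^*)^2$ alone forces $\lambda_k(R_k^\optD)$ to exceed $\lambda_k(Q)\sim 4\pi\Gamma(n/2+1)^{2/n}k^{2/n}$ once $k$ is large, a contradiction. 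Granting compactness, subtracting the two asymptotic formulas yields
\begin{equation*}
c_n\bigl(|\partial R_k^\optD| - 2n\bigr)k^{1/n} \le O(k^{\theta_n/n}),
\end{equation*}
hence $|\partial R_k^\optD| - |\partial Q| = O(k^{(\theta_n-1)/n})\cdot k^{?}$; more precisely $|\partial R_k^\optD| - 2n = O(k^{(\theta_n-(n-1))/n})$ after dividing by $k^{1/n}$ — wait, $\theta_n/n - 1/n = (\theta_n-1)/n$, and since $\theta_n<n-1$ this exponent is negative, so the perimeter excess tends to $0$.

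Finally I would convert the perimeter estimate into the side-length estimate. The perimeter of $R_{a_1,\dots,a_n}$ is $2\sum_{i=1}^n \prod_{j\ne i}a_j = 2\sum_{i=1}^n a_i^{-1}$ (using $\prod a_j=1$), and this function of $(a_1,\dots,a_n)$ on the constraint surface has a strict nondegenerate minimum at the cube, with the Hessian in the transverse directions bounded below; hence near the cube $|\partial R| - 2n \gtrsim \sum_i (a_i-1)^2 \gtrsim (a_{n}-1)^2$. Combining with $|\partial R_k^\optD|-2n = O(k^{(\theta_n-1)/n})$ gives $(a_{n,k}^*-1)^2 = O(k^{(\theta_n-1)/n})$, i.e.\ $a_{n,k}^* = 1 + O(k^{(\theta_n-(n-1))/(2n)})$ as claimed, and since $a_{1,k}^*\le\dots\le a_{n,k}^*$ with product $1$ this also pins down all the other side-lengths, so $R_k^\optD\to Q$.

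The main obstacle is establishing the two-term asymptotics for $\lambda_k(R)$ with an \emph{explicit and uniform} remainder of order $k^{\theta_n/n}$: one must carefully peel off the lower-dimensional lattice-point contributions from the reflected lattice count, control each of them by the corresponding $\theta_{n-1},\theta_{n-2},\dots$ (all of which are $<n-1$, so subsumed in the perimeter order), and — most delicately — check that the $O$-constants are uniform as the side-lengths range over a compact set, which is exactly the hypothesis built into Definition~\ref{def1} but needs to be propagated correctly through the inversion from counting function to eigenvalue. The soft compactness step and the convexity/Hessian estimate for the perimeter are comparatively routine.
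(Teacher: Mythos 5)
The structure of your argument matches the paper's, but you have hand-waved away the single hardest step: the uniform boundedness of the extremal cuboids as $k\to\infty$. Your ``soft a priori argument'' is the argument that proves a minimiser \emph{exists} for each fixed $k$, but it does not give uniform bounds in $k$. Concretely: from $a_{1,k}^*\to 0$ you only get $\lambda_k(R_k^\optD)\ge\lambda_1(R_k^\optD)\ge \pi^2/(a_{1,k}^*)^2$, and this does not contradict $\lambda_k(R_k^\optD)\le \lambda_k(Q)\sim 4\pi\Gamma(\tfrac n2+1)^{2/n}k^{2/n}$, because $a_{1,k}^*$ is allowed to depend on $k$ --- for instance $a_{1,k}^*\sim k^{-1/(2n)}$ would give $\pi^2/(a_{1,k}^*)^2\sim k^{1/n}=o(k^{2/n})$, no contradiction. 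This is exactly the pitfall the paper flags after stating \eqref{ei1}: the optimising cuboids themselves depend on $k$. Moreover, the reference to \cite{vdBergGittins} for this step is misleading: that paper handles $n=3$ via a concavity argument which (as the present paper explains) breaks for $n\ge 4$, and replacing it is the main new content here. The paper's actual route is to prove a \emph{three-term} upper bound for $N^\optD(\lambda,R)$ valid for all $\lambda\ge 0$ (Lemma~\ref{lem:D_3term_bound}), obtained by Laptev's dimensional reduction plus P\'olya's inequality plus the Aizenman--Lieb identity plus a careful one-dimensional estimate (Lemma~\ref{lem:1D-Dirichlet bound}); comparing this with the two-term Weyl asymptotics for $Q$ yields $a_{1,k}^*\ge 1/C$ uniformly, and only then does your compactness hold. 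Without that lemma, the uniformity in Definition~\ref{def1} cannot be invoked and the whole comparison collapses.

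There is also an arithmetic slip in the error term of the inverted asymptotics. Inverting $k=N^\optD(\lambda,R)=\LT{0}{n}\lambda^{n/2}-\tfrac14\LT{0}{n-1}|\partial R|\lambda^{(n-1)/2}+O(\lambda^{\theta_n/2})$ gives a remainder in $\lambda_k(R)$ of size $O(k^{\theta_n/n})/\lambda^{(n-2)/2}=O(k^{(\theta_n-(n-2))/n})$, not $O(k^{\theta_n/n})$ as you wrote (the two agree only for $n=2$). Your own display ``$|\partial R_k^\optD|-2n=O(k^{(\theta_n-1)/n})\cdot k^{?}$; more precisely $O(k^{(\theta_n-(n-1))/n})$'' shows the inconsistency: dividing your stated $O(k^{\theta_n/n})$ by $k^{1/n}$ gives $O(k^{(\theta_n-1)/n})$, which is weaker than the paper's $O(k^{(\theta_n-(n-1))/n})$ for $n\ge 3$. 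With the correct remainder $O(k^{(\theta_n-(n-2))/n})$, dividing by $k^{1/n}$ does give the right exponent. The paper side-steps this by never inverting: it sandwiches $N^\optD(\lambda_k^*-\eps,Q)<k\le N^\optD(\lambda_k^*,R_k^\optD)$ and compares counting functions at the same $\lambda_k^*$, which avoids the delicate error-propagation through the inversion. Your final conversion from a perimeter excess to a side-length bound via strict convexity of $\sum_i a_i^{-1}$ at the cube is sound \emph{once} compactness is in hand; the paper instead uses an explicit AM--GM inequality \eqref{eq:AMGM1}, but that is a matter of presentation.
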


\begin{theorem}\label{TN1}
Let $n \geq 2$. For $k \in \N$, let $R_k^\optN$ denote an $n$-dimensional unit-measure cuboid which maximises $\mu_k$. Then, as $k \rightarrow \infty$, we have that
\begin{equation}
a_{n, k}^* =
1 + O(k^{(\theta_n-(n-1))/(2n)}),
\end{equation}
where $\theta_n$ is as defined in~\eqref{def:theta_n}.
That is, any sequence of maximising $n$-dimensional cuboids $(R_k^\optN)_k$ for $\mu_k$ converges to the $n$-dimensional unit cube as $k \rightarrow \infty$.
\end{theorem}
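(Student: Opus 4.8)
The plan is to run the two theorems in parallel, since the Neumann case mirrors the Dirichlet case with the sign of the perimeter term in~\eqref{ei1} reversed. Fix $k$ large and let $R = R_k^\optD$ (resp.\ $R_k^\optN$) be an optimising cuboid with side-lengths $a_{1,k}^* \leq \dots \leq a_{n,k}^*$. The key is to compare the optimal eigenvalue with $\lambda_k(Q)$ (resp.\ $\mu_k(Q)$) through a \emph{uniform} two-term counting estimate for cuboids, of the type underlying~\eqref{ei1} but with an explicit remainder controlled by $\theta_n$. Writing $N^\optD_R(t) = \#\{i \in \N^n : \pi^2 \sum a_j^{-2} i_j^2 \leq t^2\}$ and similarly $N^\optN_R$ with $\Z_{\geq 0}^n$, one obtains from Definition~\ref{def1} (after the standard inclusion–exclusion over coordinate subspaces, restricting to positive/non-negative octant and peeling off the lower-dimensional faces) an expansion
\begin{equation}\label{eq:pp-count}
N^{\optD/\optN}_R(t) = \frac{\omega_n}{(2\pi)^n} t^n \mp \frac{\omega_{n-1}}{4(2\pi)^{n-1}} |\partial R| \, t^{n-1} + O\Bigl( t^{\theta_n} + |\partial R| \, t^{\theta_{n-1}} + \dots \Bigr),
\end{equation}
the error being uniform as long as all $a_j$ stay in a fixed compact subset of $\R_\limplus$; the lower-order face contributions are dominated once we know $a_{n,k}^* = O(k^{1/n})$ and $a_{1,k}^*$ is bounded below, which follows a priori from the measure constraint together with $\lambda_k(R) \leq \lambda_k(Q) \leq n\pi^2 k^2$ (resp.\ $\mu_k(R) \geq \mu_k(Q) \geq \pi^2$). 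Inverting~\eqref{eq:pp-count} at the level $t$ where $N = k$ yields
\begin{equation}\label{eq:pp-ev}
\lambda_k(R) = C_n k^{2/n} + D_n |\partial R| \, k^{1/n} + O\bigl(k^{\theta_n/n}\bigr), \qquad \mu_k(R) = C_n k^{2/n} - D_n |\partial R| \, k^{1/n} + O\bigl(k^{\theta_n/n}\bigr),
\end{equation}
with $C_n, D_n > 0$ the constants in~\eqref{ei1} and the $O$-term uniform over the relevant cuboids.

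Next comes the comparison argument. For the Dirichlet case, minimality gives $\lambda_k(R_k^\optD) \leq \lambda_k(Q)$; plugging~\eqref{eq:pp-ev} into both sides and using $|\partial Q| = 2n$ yields
\begin{equation}\label{eq:pp-perim}
D_n \bigl(|\partial R_k^\optD| - 2n\bigr) k^{1/n} \leq O\bigl(k^{\theta_n/n}\bigr),
\end{equation}
hence $|\partial R_k^\optD| - 2n = O(k^{(\theta_n - 1)/n})$ since $\theta_n < n-1$ makes the right-hand side lower order. The Neumann case is identical after multiplying by $-1$: maximality gives $\mu_k(R_k^\optN) \geq \mu_k(Q)$, and the perimeter term again dominates. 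The final step is a quantitative isoperimetric-type estimate for cuboids: one shows that if $\prod a_i = 1$ then $|\partial R| - 2n \geq c_n \sum_i (a_i - 1)^2 \geq c_n (a_n - 1)^2$ for some dimensional constant $c_n > 0$, at least while the $a_i$ are bounded — which is exactly the regime we are in. Indeed $\tfrac12|\partial R| = \sum_i \prod_{j\neq i} a_j = \sum_i a_i^{-1}$ (using $\prod a_j = 1$), and $f(a) = \sum a_i^{-1}$ subject to $\prod a_i = 1$ has a nondegenerate minimum at $a = (1,\dots,1)$ with value $n$, so a Taylor expansion (or a direct convexity argument) gives the claimed lower bound on a neighbourhood of the cube; combined with the a priori bound $a_{n,k}^* = O(k^{1/n})$ one first bootstraps to conclude $a_{n,k}^* \to 1$, placing us in the neighbourhood, and then the quadratic lower bound applies. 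Combining with~\eqref{eq:pp-perim} gives $(a_{n,k}^* - 1)^2 = O(k^{(\theta_n - (n-1))/n})$, i.e.\ $a_{n,k}^* = 1 + O(k^{(\theta_n - (n-1))/(2n)})$, as claimed; the same chain of inequalities controls $a_{1,k}^*$ and hence all side-lengths.

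The main obstacle is establishing~\eqref{eq:pp-count}–\eqref{eq:pp-ev} with the remainder \emph{uniform} over the family of cuboids that can possibly be optimal at stage $k$ — in particular over cuboids whose largest side $a_{n,k}^*$ is a priori only known to be $O(k^{1/n})$ rather than bounded. Definition~\ref{def1} only furnishes uniformity over \emph{compact} subsets of $\R_\limplus$, so one must first run a cruder argument (using P\'olya's inequalities~\eqref{eq:Polya_conj} for cuboids, which give one-term bounds with no shape dependence, together with a Neumann bracketing / direct test-function estimate) to trap $a_{n,k}^*$ in a fixed compact set — showing $a_{n,k}^* = O(1)$, or even $a_{n,k}^* \to 1$ qualitatively — before the sharp two-term analysis can be brought to bear. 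Handling the interplay between the lattice-point remainder in the non-integer-scaling regime and the perimeter term in~\eqref{eq:pp-count}, together with the bookkeeping of all lower-dimensional face contributions uniformly, is where the bulk of the technical work lies; once that is in place, the comparison with the cube and the isoperimetric estimate for cuboids are routine.
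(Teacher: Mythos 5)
Your overall architecture matches the paper's: a uniform two-term lattice-counting estimate with $O(t^{\theta_n})$ remainder, a comparison with the cube to extract a perimeter defect bound, and a quantitative isoperimetric estimate for cuboids ($\tfrac12|\partial R|=\sum a_i^{-1}$ has a nondegenerate minimum at the cube) to convert that into a rate for $a_{n,k}^*-1$. The paper works at the level of the counting-function inequality $N^\optN(\mu_k^*-\eps,R_k^\optN)\leq N^\optN(\mu_k^*,Q)$ rather than inverting to eigenvalue asymptotics first, but this is cosmetic. However, your inversion step contains an exponent slip that propagates: the remainder in~\eqref{eq:pp-ev} should be $O(k^{(\theta_n-(n-2))/n})$, not $O(k^{\theta_n/n})$ (this is the content of~\eqref{Dlopt} in the paper); consequently the intermediate bound you state as $|\partial R_k^\optD|-2n=O(k^{(\theta_n-1)/n})$ is both incorrectly derived and too weak — the correct chain gives $O(k^{(\theta_n-(n-1))/n})$, which is what your final answer implicitly assumes but your displayed intermediate step does not deliver.

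The more serious gap is the one you yourself flag as the ``main obstacle'': establishing that the extremal cuboids stay in a fixed compact subset of $\R_\limplus^n$, so that the uniformity clause in Definition~\ref{def1} applies. You defer this to a ``cruder argument'' invoking P\'olya's inequalities and ``Neumann bracketing / direct test-function estimate'', but P\'olya's one-term bounds carry no shape dependence and therefore cannot by themselves detect degeneration, and the rough estimate $a_{n,k}^*=O(k^{1/n})$ you extract from $\mu_k(R_k^\optN)\geq\mu_k(Q)\geq\pi^2$ is far from a fixed compact set. The paper's resolution is the genuine technical content here: via Laptev's dimensional reduction (summing P\'olya over a codimension-one sub-cuboid) combined with the Aizenman--Lieb identity and explicit one-dimensional trace bounds (Lemmas~\ref{lem:1D-Dirichlet bound}--\ref{lem:1D-Neumann bound}), one obtains a two-term \emph{lower} bound $N^\optN(\mu,R)\geq \LT{0}{n}\mu^{n/2}+c_1\LT{0}{n-1}a_1^{-1}\mu^{(n-1)/2}$ valid for all $\mu\geq 0$ (Lemma~\ref{lem:N_2term_bound}), whose $a_1^{-1}$ dependence is exactly what is needed: comparing with the two-term asymptotics for $Q$ forces $1/a_{1,k}^*$ to stay bounded. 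Without a bound of this type with explicit shape dependence in the subleading term, the comparison argument cannot get started, and your sketch does not supply one.
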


A further interesting question is what this implies for the difference between $\lambda_k^* = \lambda_k(R_k^\optD)$ and $\lambda_k(Q)$, resp.\ $\mu_k^* = \mu_k(R_k^\optN)$ and $\mu_k(Q)$.
By P\'olya's inequalities~\eqref{eq:Polya_conj}, and the leading order asymptotics of $\lambda_k(Q), \mu_k(Q)$, we see that $|\lambda_k(Q)-\lambda_k^*| = O(k^{1/n})$ and $|\mu_k(Q)-\mu_k^*|= O(k^{1/n})$. By a more detailed analysis, we obtain the following.

\begin{theorem}\label{T2}
  As $k \to \infty$,
  \begin{align}
    |\lambda_k(Q)-\lambda_k^*|&=
    O(k^{(\theta_n-(n-2))/n}),\\
    |\mu_k(Q)-\mu_k^*|&=
    O(k^{(\theta_n-(n-2))/n}),
  \end{align}
  where $\theta_n$ is as defined in~\eqref{def:theta_n}.
\end{theorem}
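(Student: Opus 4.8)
The plan is to reduce the estimate to the two-term asymptotics of the eigenvalue counting function of a cuboid, exploiting the \emph{uniform} remainder supplied by Definition~\ref{def1}. First, following the argument of Section~\ref{count}, I would record that, with $N^\optD(R,\Lambda):=\#\{j\in\N:\lambda_j(R)\le\Lambda\}$ and $N^\optN(R,\Lambda):=\#\{j\ge0:\mu_j(R)\le\Lambda\}$,
\begin{equation*}
  N^\optD(R,\Lambda)=C_1\Lambda^{n/2}-C_2|\partial R|\Lambda^{(n-1)/2}+O\bigl(\Lambda^{\theta_n/2}\bigr),\qquad N^\optN(R,\Lambda)=C_1\Lambda^{n/2}+C_2|\partial R|\Lambda^{(n-1)/2}+O\bigl(\Lambda^{\theta_n/2}\bigr)
\end{equation*}
as $\Lambda\to\infty$, for explicit constants $C_1,C_2>0$ and with remainders uniform for the side-lengths of $R$ on compact subsets of $\R_\limplus$. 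This comes from sorting the lattice points in~\eqref{def:theta_n} according to which coordinates vanish and applying inclusion--exclusion; the lower-dimensional volume and remainder terms that appear are all $O(\Lambda^{\theta_n/2})$ because $\theta_m\le\theta_n$ for $m\le n$ and $\theta_n\ge n-2$. These displays are the counting-function forms of~\eqref{ei1}.

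Next I would invert these relations. For $P>0$ let $\Lambda_k^\optD(P)$ and $\Lambda_k^\optN(P)$ denote the large roots of $C_1\Lambda^{n/2}-C_2P\Lambda^{(n-1)/2}=k$ and $C_1\Lambda^{n/2}+C_2P\Lambda^{(n-1)/2}=k$, respectively. For $P$ in a fixed bounded set and $k$ large one checks that these roots are $\asymp k^{2/n}$, that the corresponding left-hand sides have $\Lambda$-derivative $\asymp k^{1-2/n}$ at the root, and that implicit differentiation gives $\partial_P\Lambda_k^\optD(P)=O(k^{1/n})$ and $\partial_P\Lambda_k^\optN(P)=O(k^{1/n})$, all uniformly in $P$. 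Inserting the asymptotics of the first step into $\lambda_k(R)=\inf\{\Lambda:N^\optD(R,\Lambda)\ge k\}$ and the analogous identity for $\mu_k$, the $O(\Lambda^{\theta_n/2})=O(k^{\theta_n/n})$ counting-function fluctuation, divided by the $k^{1-2/n}$ slope, produces
\begin{equation*}
  \lambda_k(R)=\Lambda_k^\optD\bigl(|\partial R|\bigr)+O\bigl(k^{(\theta_n-(n-2))/n}\bigr),\qquad \mu_k(R)=\Lambda_k^\optN\bigl(|\partial R|\bigr)+O\bigl(k^{(\theta_n-(n-2))/n}\bigr),
\end{equation*}
uniformly for side-lengths of $R$ on compact subsets of $\R_\limplus$; in particular all eigenvalue-multiplicity effects are absorbed into this error.

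The final ingredient is that the perimeter functional has a critical point at the cube. Writing $a_i=e^{t_i}$ with $\sum_it_i=0$ (possible since $\prod_ia_i=1$), one gets $|\partial R|=2\sum_ia_i^{-1}=2\sum_ie^{-t_i}=2n+\sum_it_i^2+O\bigl(\max_i|t_i|^3\bigr)$, so $0\le|\partial R|-|\partial Q|=O\bigl(\max_i|a_i-1|^2\bigr)$. By Theorem~\ref{T1}, $a_{n,k}^*=1+O(k^{(\theta_n-(n-1))/(2n)})$, and since $a_{1,k}^*\le\cdots\le a_{n,k}^*$ have product $1$ this forces $\max_i|a_{i,k}^*-1|=O(k^{(\theta_n-(n-1))/(2n)})$; in particular the side-lengths of $R_k^\optD$ eventually lie in a fixed compact subset of $\R_\limplus$ and $0\le|\partial R_k^\optD|-|\partial Q|=O(k^{(\theta_n-(n-1))/n})$. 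Using the triangle inequality together with the two displays of the previous step (with $R=Q$ and with $R=R_k^\optD$) and the mean value theorem for $\Lambda_k^\optD(\cdot)$,
\begin{equation*}
  |\lambda_k(Q)-\lambda_k^*|=O\bigl(k^{(\theta_n-(n-2))/n}\bigr)+O(k^{1/n})\cdot O\bigl(k^{(\theta_n-(n-1))/n}\bigr)+O\bigl(k^{(\theta_n-(n-2))/n}\bigr)=O\bigl(k^{(\theta_n-(n-2))/n}\bigr),
\end{equation*}
the middle term using $\sup_P|\partial_P\Lambda_k^\optD(P)|=O(k^{1/n})$ over the relevant range of $P$ and the exponents combining because $\tfrac1n+\tfrac{\theta_n-(n-1)}{n}=\tfrac{\theta_n-(n-2)}{n}$. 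The Neumann statement follows identically, with $N^\optN$, $\Lambda_k^\optN$, $\mu_k^*$ and Theorem~\ref{TN1} in place of their Dirichlet counterparts.

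The step I expect to demand the most care is the inversion: one must verify that the $O(k^{(\theta_n-(n-2))/n})$ error in $\lambda_k(R)=\Lambda_k^\optD(|\partial R|)+\cdots$ holds \emph{uniformly} over the cuboids in play, i.e.\ uniformly for side-lengths on a fixed compact set. This is exactly what the uniformity built into Definition~\ref{def1} provides, and is the reason the counting-function asymptotics must be stated with a uniform remainder. By contrast, the inclusion--exclusion of the first step, the Lipschitz bounds $\partial_P\Lambda_k^\optD,\partial_P\Lambda_k^\optN=O(k^{1/n})$, and the quadratic vanishing of $|\partial R|-|\partial Q|$ at the cube --- which upgrades the $O(k^{(\theta_n-(n-1))/(2n)})$ of Theorems~\ref{T1} and~\ref{TN1} to the $O(k^{(\theta_n-(n-1))/n})$ actually needed --- are all routine.
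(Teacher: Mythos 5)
Your proof is correct and the exponent bookkeeping checks out, but it takes a genuinely different and somewhat longer route than the paper's. The paper exploits the one-sidedness of the optimisation: it sandwiches $k$ between $N^\optD(\lambda_k^*-\tfrac12, Q)$ and $N^\optD(\lambda_k^*, R_k^\optD)$, expands both via Lemma~\ref{lem:Asymptotic Expansion}, and then simply invokes the isoperimetric inequality $|\partial R_k^\optD|\ge|\partial Q|$ to replace $|\partial R_k^\optD|$ by $|\partial Q|$ on the upper side --- since the perimeter term carries a minus sign, this only weakens the inequality. The net effect is that $\lambda_k^*$ and $\lambda_k(Q)$ satisfy the same two-term expansion in terms of $k$, both with perimeter coefficient $|\partial Q|$, so after a single inversion the entire difference lands in the $O(k^{(\theta_n-(n-2))/n})$ remainder. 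This needs only the uniform boundedness of the optimal cuboids from Section~\ref{boundedness}, not the full convergence rate of Theorem~\ref{T1}. Your argument instead calls on Theorem~\ref{T1} to bound the side-length defect, pushes it through the quadratic vanishing of $|\partial R|-|\partial Q|$ at the cube to obtain $|\partial R_k^\optD|-|\partial Q|=O(k^{(\theta_n-(n-1))/n})$, and then propagates that via the Lipschitz estimate $\partial_P\Lambda_k^\optD=O(k^{1/n})$. That buys a more explicit picture of how the perimeter defect feeds into the spectral defect, at the cost of invoking Theorem~\ref{T1} as a prerequisite and carrying out extra calculus on the implicit function $\Lambda_k^\optD(\cdot)$. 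Both arguments are sound; the paper's is shorter because its sign structure lets it skip the perimeter-to-eigenvalue Lipschitz step entirely.
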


Note that for $n\geq 5$ the above estimate states that the difference between the extremal eigenvalues and those of the unit cube remain bounded for all $k$, which we do not know to be the case for $n<5$.

\subsection{Strategy of proof}
Let $r \geq 0$ and let $R\subset \R^n$ be a cuboid of measure one. We define
\begin{equation}\label{e1}
E(r, R) := \biggl\{(x_{1}, \dots, x_{n}) \in \R^{n} : \sum_{j=1}^{n}\frac{x_{j}^{2}}{a_{j}^{2}}
\leq \frac{r}{\pi^{2}} \biggr\}.
\end{equation}
The set $E(r, R)\subset \R^n$ is an $n$-dimensional ellipsoid with radii $r_j = \frac{a_j r^{1/2}}{\pi}$, $j=1, \dots, n$, and measure $\vert E(r, R)\vert = \omega_n \prod_{j=1}^{n} r_j=\frac{\omega_n r^{n/2}}{\pi^n}$.

By \eqref{ei2}, we see that the Dirichlet eigenvalues $\lambda_1(R), \dots, \lambda_k(R)$ correspond to
integer lattice points with positive coordinates that lie inside or on the ellipsoid $E(\lambda_k(R), R)$.
In this setting, determining a cuboid of unit measure which minimises $\lambda_k$ corresponds to determining the
ellipsoid which contains $k$ integer lattice points with positive coordinates and has minimal measure.
Similarly, the Neumann eigenvalues $\mu_0(R), \mu_1(R), \dots, \mu_k(R)$ correspond to
integer lattice points with non-negative coordinates that lie inside or on the ellipsoid $E(\mu_k(R), R)$.
Determining a cuboid of unit measure which maximises $\mu_k$ corresponds to determining the
ellipsoid of maximal measure which contains fewer than $k+1$ integer lattice points with non-negative coordinates.

This observation is used to prove Theorems~\ref{T1} and~\ref{TN1} by following the strategy of~\cite{MR3001382} (see also~\cite{vdBergGittins,vdBBG}). In particular, we compare the number of lattice points that are inside or on a minimal, respectively maximal, ellipsoid to the number of lattice points that are inside or on the sphere with radius $\pi^{-1}(\lambda^*_k)^{1/2}$, respectively $\pi^{-1}(\mu^*_k)^{1/2}$, and let $k \rightarrow \infty$.
To make this comparison, we use known estimates for the number of integer lattice points that are inside or on an $n$-dimensional ellipsoid (this explains the appearance of the quantity $\theta_n$ in the above results). However, in order to use these estimates, we must first show that for any sequence of minimising or maximising cuboids, the corresponding side-lengths are bounded independently of $k$. The difficulty lies in obtaining a sufficiently good upper, resp.\ lower, bound for the Dirichlet, resp.\ Neumann, counting function which, for $\lambda, \mu \geq 0$ and $R, E(r, R)$ as above,
we define as
\begin{equation}\label{e2}
\begin{aligned}
N^\optD(\lambda, R)&:=\# \{(i_{1}, \dots, i_{n}) \in \N^{n} \cap E(\lambda, R)\},\\
N^{\optN}(\mu, R)&:=\# \{(i_{1}, \dots, i_{n}) \in (\N \cup \{0\})^{n} \cap E(\mu, R)\}.
\end{aligned}
\end{equation}

In this paper, in order to obtain an upper bound for $N^\optD(\lambda, R)$ and corresponding lower bound for $N^{\optN}(\mu, R)$, we make use of an argument going back to Laptev~\cite{Lap} and the fact that cuboids satisfy P\'olya's inequalities~\eqref{eq:Polya_conj}. This argument, together with an application of an identity due to Aizenman and Lieb (see~\cite{AizenmanLieb} or~\eqref{eq:AizenmanLieb} below), allows us to reduce the problem to estimating $\sum_{k}(\lambda-k^2)_\limplus$, which arises as the Riesz mean of the Laplacian on an interval.

The approach used in~\cite{MR3001382, vdBBG} and~\cite{vdBergGittins} to prove the two- and three-dimensional versions of Theorems~\ref{T1} and~\ref{TN1} makes use of the fact that
the functions $i \mapsto(y - i^2)^{m/2}$, for $m=1, 2,$ are concave on $[0, y^{1/2}]$.
However, for $m \geq 3$, this concavity fails and hence this approach cannot be used to deal with the higher-dimensional cases, see~\cite{vdBergGittins}. To use the same approach as in~\cite{vdBBG} to deal with the case $n = 3$, it would also be necessary to show that $\limsup_{k \to \infty} (a_{1,k}^*)^{-1} (\mu_k^*)^{-1/2} < \infty$ (compare with~\cite[Lemma~2.3]{vdBBG}). The approach taken for the Neumann case here allows us to obtain a two-term lower bound for $N^{\optN}(\mu, R)$ which enables us to avoid such considerations.
This issue was also avoided when the two-dimensional case was proven in~\cite{LL}.
Nonetheless, in any dimension it is possible to obtain a bound for the quantity $\limsup_{k\to \infty}(a_{1,k}^*)^{-1}(\mu_k^*)^{-1/2}$ by exploiting that if $a_{1,k}^*=o((\mu^*_k)^{-1/2})$ then all $\mu_l(R^\optN_k)$, for $l<k$, must be of the form $\pi^2 \sum_{j=2}^n i_j^2 (a_{j, k}^*)^{-2}$ and by the maximality of $\mu^*_k$ the domain $\prod_{j=2}^n(0, a_{j,k}^*)$ must be a maximiser of $\mu_k$ amongst cuboids in $\R^{n-1}$ of measure $1/a_{1,k}^*$.

\subsection{Additional remarks.}
Our approach naturally lifts to considering shape optimisation problems of maximising, resp.\ minimising, the Riesz means of Dirichlet, resp.\ Neumann, eigenvalues, which for $\lambda, \mu\geq 0$ and $\gamma \geq 0$ are defined by
\begin{align}
  \Tr(-\Delta^\optD_\Omega-\lambda)_\limminus^{\gamma}= \sum_{k=1}^\infty (\lambda-\lambda_k(\Omega))_\limplus^\gamma,\quad \mbox{resp.}\quad
  \Tr(-\Delta^\optN_\Omega-\mu)_\limminus^{\gamma}= \sum_{k=0}^\infty (\mu-\mu_k(\Omega))_\limplus^\gamma.
\end{align}
For $\Omega \subset \R^n$ and $\gamma \geq 3/2$ the Dirichlet case of this problem was addressed in~\cite{SL}, where it was shown that amongst collections of convex sets of unit measure, satisfying certain additional regularity assumptions, the extremal sets converge to the ball as $\lambda \to \infty$. Within the collection of $n$-dimensional cuboids we obtain the corresponding result for all $\gamma \geq 0$ in both the Dirichlet and Neumann cases, that is, any sequence of optimal cuboids converges to the unit cube as $\lambda, \mu \to \infty$ (see Propositions~\ref{prop:Riesz means Dirichlet} and~\ref{prop:Riesz means Neumann} below).

A problem which is closely related to that considered here was recently studied by Laugesen and Liu~\cite{LL}. In this article the authors consider a collection of concave, planar curves that lie in the first quadrant and have intercepts $(L, 0)$ and $(0, M)$. They fix such a curve and scale it in the $x$ direction by $s^{-1}$ and in the $y$ direction by $s$, as well as radially by $r$. Their goal is to determine the curve which contains the most integer lattice points in the first quadrant as $r \rightarrow \infty$. Under certain assumptions on the curve they prove that the optimal stretch factor $s(r) \rightarrow 1$ as $r \rightarrow \infty$. In particular, they recover the result of Antunes and Freitas~\cite{MR3001382}, and, in a similar way, that of van den Berg, Bucur and Gittins~\cite{vdBBG}.
They also obtain analogous results for $p$\pps-ellipses where $1<p<\infty$.
The case where $0<p<1$ has recently been addressed by Ariturk and Laugesen in~\cite{AL}. As mentioned above, the results of that paper lend some support to Theorem~\ref{T1} in the case where $n \geq 5$ (see~\cite[Section~2]{AL}).
Recently the case $p=1$ was treated by Marshall and Steinerberger~\cite{MS}. In contrast to the case $p\neq 1$, the set of maximising $s$
in this setting does not converge when $r\to \infty$ and in fact there is an infinite set of limit points.
After the first version of this paper appeared Marshall generalised the results of Laugesen and Liu to an $n$-dimensional setting~\cite{Marshall}. The results of that paper include the convergence results of Theorems~\ref{T1} and~\ref{TN1} as special cases.

The plan for the remainder of the paper is as follows. In Section~\ref{upperbound} we obtain bounds for the eigenvalue counting functions $N^\optD, N^{\optN}$. We continue in Section~\ref{boundedness} by applying the obtained bounds to prove that the side-lengths of a sequence of minimising, respectively maximising, cuboids $(R_k^\optD)_k$, $(R_k^\optN)_k$ are bounded independently of $k$.
In Section~\ref{count} we prove uniform asymptotic expansions for the counting functions $N^\optD(\lambda, R), N^{\optN}(\mu, R)$. All the above is combined in Section~\ref{convergence&stability} in order to prove Theorems~\ref{T1},~\ref{TN1} and~\ref{T2}. Finally, in Section~\ref{Sums_Means} we apply our methods to the shape optimisation problems of maximising, resp.\ minimising, the Riesz means of Dirichlet, resp.\ Neumann, eigenvalues and minimising the average of the first $k$ Dirichlet eigenvalues. For both problems we obtain analogous results to those obtained in the case of individual eigenvalues.

% ------------------  Prelim  --------------------

\section{Preliminaries}
We begin this section by establishing three- respectively two-term bounds for the eigenvalue counting functions for the Dirichlet and Neumann Laplacians on an arbitrary cuboid. These bounds will allow us to prove that the sequence of extremal cuboids remains uniformly bounded, i.e.\ does not degenerate, as $k$ tends to infinity (see Section~\ref{boundedness}).

We end this section by obtaining
precise and uniform asymptotic expansions for the eigenvalue counting functions on the sequence of extremal cuboids.

Here and in what follows we let $\LT{\gamma}{m}$ denote the semi-classical Lieb--Thirring constant
\begin{equation}
  \LT{\gamma}m = \frac{\Gamma(\gamma+1)}{(4\pi)^{m/2}\Gamma(\gamma+ \frac{m}{2}+1)}.
\end{equation}
For $x\in \R$ we also define the positive and negative parts of $x$ by $x_\limpm = (|x|\pm x)/2$.

% ------------------  many-term bounds  --------------

\subsection{Asymptotically sharp bounds for the eigenvalue counting functions}\label{upperbound}

In this section we prove a three-term upper bound for the counting function $N^\optD(\lambda, R)$
and a two-term lower bound for the counting function $N^\optN(\mu, R)$. More specifically we prove the following lemmas.

\begin{lemma}\label{lem:D_3term_bound}
For $n\geq 2$, there exist positive constants $c_1, c_2$ and $b_0$ such that, for any cuboid $R\subset \R^n$ with $|R|=1$, the bound
  \begin{equation}
    N^\optD(\lambda, R) \leq
     \LT{0}{n}\lambda^{n/2}-\frac{c_1 b \LT{0}{n-1}}{a_1}\lambda^{(n-1)/2}+ \frac{c_2 b^2\LT{0}{n-2}}{a_1^2}\lambda^{(n-2)/2},
  \end{equation}
  holds for all $\lambda \geq 0$ and $b\in \bigl[0, b_0\bigr]$.
\end{lemma}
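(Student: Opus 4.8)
The plan is to peel off the shortest side of $R$, apply P\'olya's inequality on the remaining slice, and reduce the claim to a one-dimensional estimate for a Riesz mean on an interval. Write $R=(0,a_1)\times R'$ with $R'=\prod_{j=2}^n(0,a_j)$, an $(n-1)$-dimensional cuboid of measure $|R'|=1/a_1$. From the explicit form of the spectrum in~\eqref{ei2}, separating the variable $x_1$ gives
\begin{equation}
  N^\optD(\lambda,R)=\sum_{i=1}^\infty N^\optD\Bigl(\lambda-\frac{\pi^2 i^2}{a_1^2},R'\Bigr),
\end{equation}
and since $R'$ tiles $\R^{n-1}$, P\'olya's inequality~\eqref{eq:Polya_conj} in the counting-function form $N^\optD(\tau,R')\le \LT{0}{n-1}|R'|\tau_\limplus^{(n-1)/2}$ yields
\begin{equation}\label{eq:plan-reduction}
  N^\optD(\lambda,R)\le \frac{\LT{0}{n-1}}{a_1}\sum_{i=1}^\infty\Bigl(\lambda-\frac{\pi^2 i^2}{a_1^2}\Bigr)_\limplus^{(n-1)/2}.
\end{equation}
After the substitution $y=\lambda a_1^2/\pi^2$ the right-hand side of~\eqref{eq:plan-reduction} equals $\tfrac{\LT{0}{n-1}}{a_1}(\pi/a_1)^{n-1}\Sigma_{(n-1)/2}(y)$, where $\Sigma_\sigma(y):=\sum_{i\ge1}(y-i^2)_\limplus^\sigma$; so everything reduces to a three-term upper bound for $\Sigma_\sigma(y)$, $\sigma=(n-1)/2$, valid for all $y\ge0$.

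The one-dimensional claim I would establish is that there are constants $c_1,c_2,b_0>0$, depending only on $n$, such that
\begin{equation}\label{eq:plan-1d}
  \Sigma_{(n-1)/2}(y)\le A_{(n-1)/2}\,y^{n/2}-c_1 b\,y^{(n-1)/2}+c_2 b^2\,y^{(n-2)/2}\qquad\text{for all }y\ge0,\ b\in[0,b_0],
\end{equation}
where $A_\sigma=\int_0^1(1-u^2)^\sigma\,du=\tfrac{\sqrt\pi\,\Gamma(\sigma+1)}{2\Gamma(\sigma+3/2)}$ is the constant for which $\int_0^{\sqrt y}(y-x^2)^\sigma\,dx=A_\sigma y^{\sigma+1/2}$. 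For $\sigma\le 1$, i.e.\ $n\in\{2,3\}$, the summand $i\mapsto(y-i^2)_\limplus^\sigma$ is concave on $[0,\sqrt y\,]$, and comparing the sum with that integral using concavity (or, when $\sigma=1$, the closed form $\Sigma_1(u)=\lfloor\sqrt u\,\rfloor u-\tfrac16\lfloor\sqrt u\,\rfloor(\lfloor\sqrt u\,\rfloor+1)(2\lfloor\sqrt u\,\rfloor+1)$) gives a bound of the shape $A_\sigma y^{\sigma+1/2}-\tfrac12 y^\sigma+O(y^{\sigma-1/2}+1)$; elementary manipulations (shrinking the coefficient of $y^\sigma$ to $c_1b$, and checking that the right-hand side of~\eqref{eq:plan-1d} is nonnegative on $[0,1)$, where $\Sigma_\sigma=0$, which holds once $c_1^2\le 4A_\sigma c_2$ and $b_0$ is small) then give~\eqref{eq:plan-1d}. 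For $\sigma>1$, i.e.\ $n\ge4$, concavity fails, and here I would invoke the Aizenman--Lieb identity~\eqref{eq:AizenmanLieb} to write
\begin{equation}
  (y-i^2)_\limplus^{(n-1)/2}=\frac{\Gamma(\tfrac{n+1}{2})}{\Gamma(\tfrac{n-3}{2})}\int_0^\infty(y-s-i^2)_\limplus\,s^{(n-5)/2}\,ds,
\end{equation}
which exhibits $\Sigma_{(n-1)/2}(y)$ as a superposition of the concave sums $\Sigma_1(y-s)$; inserting the $\sigma=1$ case of~\eqref{eq:plan-1d} and evaluating the resulting Beta-integrals restores~\eqref{eq:plan-1d} for $\sigma=(n-1)/2$, with unchanged leading constant and the same powers of $y$ and $b$.

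Finally I would feed~\eqref{eq:plan-1d} back through~\eqref{eq:plan-reduction}. Using the Gamma-function form of the semiclassical constants one checks $\tfrac1\pi\LT{0}{n-1}A_{(n-1)/2}=\LT{0}{n}$, so the leading term becomes $\LT{0}{n}\lambda^{n/2}$; the $y^{(n-1)/2}$ term contributes $-(\mathrm{const})\,b\,\LT{0}{n-1}a_1^{-1}\lambda^{(n-1)/2}$ and the $y^{(n-2)/2}$ term contributes $+(\mathrm{const})\,b^2\,\LT{0}{n-2}a_1^{-2}\lambda^{(n-2)/2}$ (the fixed ratios of Lieb--Thirring constants being absorbed into $c_1,c_2$), which is the asserted inequality. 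The main obstacle is the one-dimensional estimate~\eqref{eq:plan-1d}: one must make the three-term bound hold all the way down to $y=0$, where the summand is most delicate near $i\approx\sqrt y$, and arrange $c_1,c_2,b_0$ so that the whole family indexed by $b\in[0,b_0]$ is valid, this freedom in $b$ (later taken comparable to $a_1\sqrt\lambda$ when that quantity is small, and equal to $b_0$ otherwise) being exactly what makes the bound usable in Section~\ref{boundedness}. The conceptual difficulty, the loss of concavity of $i\mapsto(y-i^2)_\limplus^{(n-1)/2}$ when $n\ge4$, is precisely what the Aizenman--Lieb reduction is there to bypass.
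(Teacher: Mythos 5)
Your reduction scheme is the same as the paper's: peel off the shortest side, apply P\'olya's inequality for the counting function on the $(n-1)$-dimensional slice $R'$ (an idea going back to Laptev), express the result as the order-$(n-1)/2$ Riesz mean of the one-dimensional Dirichlet Laplacian on $(0,a_1)$, handle $n\ge 4$ by the Aizenman--Lieb identity reducing to order $1$, and handle $n=3$ by the explicit closed form for $\Sigma_1$. Your Aizenman--Lieb step is written termwise on $(y-i^2)_\limplus^{(n-1)/2}$ rather than on the trace, but that is the same identity. The one place where you diverge from the paper is $n=2$: the paper quotes Laugesen--Liu's Proposition~10, which is a genuinely two-dimensional lattice-point estimate for ellipses, whereas you propose to stay inside the one-dimensional reduction and estimate $\Sigma_{1/2}(y)$ by concavity.

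On that last point there is a small but real inaccuracy you should repair. Your claimed error term $O(y^{\sigma-1/2}+1)$ is correct for $\sigma=1$ (i.e.\ $n=3$) but not for $\sigma=1/2$. Applying the trapezoidal bound to the concave function $f(x)=(y-x^2)^{1/2}$ on $[0,M]$ with $M=\lfloor\sqrt y\rfloor$ leaves the boundary term $\tfrac12 f(M)-\int_M^{\sqrt y}f$, and $f(M)=\sqrt{y-M^2}$ can be as large as order $y^{1/4}$ (when $\sqrt y-M$ is close to $1$), not $O(1)$. So the statement that concavity yields $\Sigma_{1/2}(y)\le \tfrac\pi4 y-\tfrac12\sqrt y+O(1)$ is false as written, and you cannot dump this error into the constant term $c_2b^2$, since for $b$ small that term is arbitrarily small while the error is not. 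The argument can still be salvaged, but only by tracking the constant: one can show $\tfrac12 f(M)-\int_M^{\sqrt y}f\le \tfrac{\sqrt2}{6}\,y^{1/4}$, and since $\tfrac{\sqrt2}{6}<\tfrac12$ one absorbs this $y^{1/4}$ term into the $-\tfrac12\sqrt y$ term (using $y^{1/4}\le \sqrt y$ for $y\ge1$), leaving a strictly negative coefficient for $\sqrt y$; the $c_2b^2$ term is then only needed to keep the right-hand side nonnegative for $y\in[0,1)$. As stated, though, your sketch elides this, and a reader following it literally would hit a dead end at $n=2$; either supply the sharper constant or do what the paper does and invoke Laugesen--Liu for the base case.
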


\begin{lemma}\label{lem:N_2term_bound}
For $n\geq 2$, there exists $c_1>0$ such that for any cuboid $R\subset \R^n$, with $|R|=1$, the bound
  \begin{equation}
    N^\optN(\mu, R) \geq
     \LT{0}{n}\mu^{n/2} + \frac{c_1\LT{0}{n-1}}{a_1}\mu^{(n-1)/2},
  \end{equation}
  holds for all $\mu\geq 0$.
\end{lemma}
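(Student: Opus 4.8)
The plan is to reduce the $n$-dimensional Neumann lattice-point count to a one-dimensional problem via an Aizenman--Lieb-type iteration in the spectral parameter, exactly as is done for the Dirichlet upper bound in Lemma~\ref{lem:D_3term_bound}, but with the inequalities reversed. Write the Neumann counting function as a product: $N^\optN(\mu, R) = \#\{(i_1,\ldots,i_n)\in(\N\cup\{0\})^n : \pi^2\sum_j i_j^2/a_j^2 \le \mu\}$. First I would isolate the shortest side $a_1$ and observe that for each fixed choice of $(i_2,\ldots,i_n)$ contributing a partial eigenvalue $\nu = \pi^2\sum_{j\ge 2} i_j^2/a_j^2 \le \mu$, the number of admissible $i_1$ is $\#\{i_1\ge 0 : \pi^2 i_1^2/a_1^2 \le \mu-\nu\} = 1 + \lfloor \tfrac{a_1}{\pi}(\mu-\nu)_\limplus^{1/2}\rfloor \ge 1 + \tfrac{a_1}{\pi}(\mu-\nu)_\limplus^{1/2} - 1 = \tfrac{a_1}{\pi}(\mu-\nu)_\limplus^{1/2}$. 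Actually, to get the \emph{gain} one wants the sharper elementary bound $\#\{i_1\ge 0 : i_1^2 \le s^2\} = 1+\lfloor s\rfloor \ge s + \tfrac12$ for $s\ge 0$ (which holds since $1+\lfloor s\rfloor \ge 1 + s - 1 = s$ is too weak; instead use that $1+\lfloor s\rfloor \ge \max(s+\tfrac12, 1)$, valid because at $s=0$ the count is $1\ge\tfrac12$ and the floor function stays above the line $s-\tfrac12$). Summing the lower bound $\tfrac{a_1}{\pi}(\mu-\nu)^{1/2}_\limplus + \tfrac12$ over all $(i_2,\ldots,i_n)$ gives
\begin{equation}
  N^\optN(\mu,R) \ge \frac{a_1}{\pi}\sum_{(i_2,\ldots,i_n)}\Bigl(\mu - \pi^2\sum_{j\ge 2}\frac{i_j^2}{a_j^2}\Bigr)_\limplus^{1/2} + \frac12 N^\optN(\mu, R'),
\end{equation}
where $R' = \prod_{j=2}^n(0,a_j)\subset\R^{n-1}$; the first term is (up to the factor $a_1/\pi$) a Riesz mean of order $1/2$ of the Neumann Laplacian on $R'$, and the second term is an $(n-1)$-dimensional Neumann count.

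For the Riesz-mean term I would invoke the Neumann analogue of P\'olya's inequality for cuboids: since $R'$ tiles $\R^{n-1}$, the Kr\"oger/P\'olya bound $N^\optN(\nu, R') \ge \LT{0}{n-1}\nu^{(n-1)/2}$ holds for all $\nu\ge 0$ (this is the Neumann half of~\eqref{eq:Polya_conj}, applied in dimension $n-1$, and it is exactly the statement of the lemma one dimension down). Combined with the Aizenman--Lieb identity~\eqref{eq:AizenmanLieb}, which expresses $\Tr(-\Delta^\optN_{R'}-\mu)_\limminus^{1/2} = \sum(\mu-\mu_k(R'))_\limplus^{1/2}$ as a positive-weight integral of $N^\optN(\nu,R') = \#\{k : \mu_k(R')<\nu\}$ against $(\mu-\nu)^{-1/2}d\nu$, the P\'olya lower bound on the counting function integrates up to a lower bound $\sum(\mu-\mu_k(R'))_\limplus^{1/2} \ge C_{n-1}\mu^{n/2}$ with the sharp semiclassical constant. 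Here $\sum_{(i_2,\ldots,i_n)}(\mu-\pi^2\sum i_j^2/a_j^2)_\limplus^{1/2}$ is precisely this Riesz mean (with $|R'|=1/a_1$, but the constant scales correctly), so the first term contributes $\gtrsim \tfrac{a_1}{\pi}\cdot C_{n-1}\mu^{n/2}$. The bookkeeping of the constants — checking that $\tfrac{a_1}{\pi}C_{n-1}\mu^{n/2}$ has the form $\LT{0}{n}\mu^{n/2} + c\LT{0}{n-1}a_1^{-1}\mu^{(n-1)/2}\cdot(\text{something})$ — is where one must be slightly careful: the leading $\mu^{n/2}$ coefficient coming out of this must match $\LT{0}{n}$ exactly (by the Weyl law / Fubini factorisation of semiclassical constants, $\tfrac{a_1}{\pi}C_{n-1} = \LT{0}{n}$ after accounting for the $1/a_1$ measure of $R'$), so the genuine content of the inequality is in the remainder.

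Alternatively, and more in the spirit of the Dirichlet proof, one iterates: peel off one variable at a time, at each stage using the $\tfrac12$ from the ``+$\tfrac12$'' estimate to pick up a power-lower term. A cleaner route is to split $N^\optN(\mu,R)\ge \tfrac{a_1}\pi \Tr(-\Delta^\optN_{R'}-\mu)^{1/2}_\limminus$ using only the ``$\ge s$'' part, then bound that Riesz mean from below by its two-term semiclassical asymptotics. But the two-term \emph{lower} bound for a Riesz mean is not automatic; the robust input we actually have is just the one-term P\'olya inequality, so the honest source of the second term $c_1\LT{0}{n-1}a_1^{-1}\mu^{(n-1)/2}$ is the discrepancy between $1+\lfloor s\rfloor$ and $s$, i.e.\ the ``$+\tfrac12$'' (or ``$+1$'' counting the $i_1=0$ slice), summed over the $\asymp \LT{0}{n-1}(a_1^{-1})^{?}\mu^{(n-1)/2}$... — and here one must be careful that the $(n-1)$-dimensional count $N^\optN(\mu,R')$ is itself $\ge \LT{0}{n-1}\mu^{(n-1)/2}$ only if $|R'|=1/a_1\ge$ something, which fails when $a_1$ is large; however when $a_1$ is large the direct factorisation $N^\optN(\mu,R)\ge (1+\lfloor\tfrac{a_1}{\pi}\mu^{1/2}\rfloor)\cdot 1 \ge \tfrac{a_1}{\pi}\mu^{1/2}$ already beats $\LT{0}{n-1}a_1^{-1}\mu^{(n-1)/2}$ for the relevant range, so one handles small and large $a_1$ separately.

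\textbf{Main obstacle.} The delicate point is making the constant in front of $\mu^{n/2}$ come out \emph{exactly} $\LT{0}{n}$ while simultaneously extracting a strictly positive coefficient for $\mu^{(n-1)/2}$ that is uniform over all cuboids $R$ with $|R|=1$ — in particular uniform as $a_1\to 0$ (degenerate thin direction) and as $a_1\to\infty$. I expect to resolve this by treating the regimes $\mu \lesssim a_1^{-2}$ and $\mu \gtrsim a_1^{-2}$ separately: in the first regime only $i_1=0$ contributes so the problem is genuinely $(n-1)$-dimensional and one argues by induction on $n$; in the second regime the sum over $i_1$ is long enough that the ``$+\tfrac12$'' surplus, summed against the $(n-1)$-dimensional slice count, produces the claimed $a_1^{-1}\mu^{(n-1)/2}$ term with a dimensional constant, using the Aizenman--Lieb identity plus P\'olya's inequality in dimension $n-1$ as the inductive hypothesis, with the base case $n=2$ following from the elementary estimate $\#\{(i_1,i_2)\in(\N\cup\{0\})^2 : i_1^2/a_1^2+i_2^2/a_2^2\le \mu/\pi^2\} \ge \sum_{i_2\ge 0}\bigl(\tfrac{a_1}\pi(\mu-\pi^2 i_2^2/a_2^2)^{1/2}_\limplus+\tfrac12\bigr)$ and summing.
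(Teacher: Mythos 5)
Your decomposition is the transpose of the one the paper actually uses, and that transposition is where the proof breaks. You isolate the shortest side $a_1$, sum over $(i_2,\ldots,i_n)$, and count $i_1$ for each slice, so the residual one-dimensional object is the Neumann \emph{counting function} $1+\lfloor s\rfloor$ on $(0,a_1)$. The paper does the opposite: it sums over the eigenvalues of $(0,a_1)$ and applies P\'olya to the $(n-1)$-dimensional count over $R'$, so the residual one-dimensional object is a Neumann \emph{Riesz mean} $\Tr(-\Delta^\optN_{(0,a_1)}-\mu)^{(n-1)/2}_\limminus$ of order $(n-1)/2\ge 1$ (order $1/2$ when $n=2$, where they instead cite Laugesen--Liu). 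This distinction matters because a Riesz mean of order $\ge 1$ on an interval admits an explicit two-term lower bound (the paper's Lemma~\ref{lem:1D-Neumann bound}), whereas a one-dimensional counting function does not.

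Concretely, the inequality you rely on, $1+\lfloor s\rfloor\ge s+\tfrac12$ for all $s\ge 0$, is false: take $s=0.99$, then the left side is $1$ and the right side is $1.49$. More generally $1+\lfloor s\rfloor - s = 1-\{s\}$, which is positive but approaches $0$ as $s\uparrow$ any integer, so no inequality of the form $1+\lfloor s\rfloor\ge s+c$ with a fixed $c>0$ can hold uniformly. Your fallback claim that ``the floor function stays above the line $s-\tfrac12$'' is the same false statement. Consequently the identity you actually have is
\begin{equation}
  N^\optN(\mu,R)=\frac{a_1}{\pi}\Tr(-\Delta^\optN_{R'}-\mu)^{1/2}_\limminus+\sum_{\nu\in\sigma(R'),\,\nu\le\mu}\bigl(1-\{\tfrac{a_1}{\pi}(\mu-\nu)^{1/2}\}\bigr),
\end{equation}
and the second sum has no pointwise positive lower bound per term; whether it sums to $\gtrsim N^\optN(\mu,R')$ is essentially the content of the lemma itself, so the argument is circular at this point. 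You half-notice this (``the two-term lower bound for a Riesz mean is not automatic''), but the fix is not the ``$+\tfrac12$ discrepancy'': it is to put the Riesz average, not the count, on the $(0,a_1)$ side, which is exactly~\eqref{eq:dim_reduction_N} in the paper. Your constant-checking for the leading term, $\tfrac{1}{\pi}\LT{1/2}{n-1}=\LT{0}{n}$, is correct, but that was never in doubt; the missing piece is the second term, and your proposed regime-splitting plus induction does not, as sketched, close it.
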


\begin{remark}
  The parameter $b$ in our bounds for $N^\optD(\lambda, R)$ allows us to tune whether we wish the bound to be more accurate near the bottom of the spectrum or asymptotically as $\lambda \to \infty$. This flexibility will be of importance for us when we prove the uniform boundedness of the extremal cuboids for the Dirichlet problem, see Section~\ref{boundedness}.

  It should be noted that for large $\lambda$ the third term is not fundamental and could be absorbed by the second one. For instance, when $n\geq 5$ a bound similar to Lemma~\ref{lem:D_3term_bound} was obtained in~\cite[Corollary~1.2]{SLpams} without the third term by instead requiring that $\lambda$ is large enough. Similarly a two-term bound in the two-dimensional case was obtained in~\cite[Proposition~10]{LL}.
  However, the procedure of lifting the above bounds to Riesz means is much simplified if the bounds are valid for all $\lambda\geq 0$, and correspondingly $\mu\geq 0$ (see Section~\ref{Sums_Means}).
\end{remark}

\begin{proof}[Proof of Lemmas~\ref{lem:D_3term_bound} and~\ref{lem:N_2term_bound}]
The main idea of the proof is to reduce the problem to proving one-dimensional estimates. To this end we follow an idea due to Laptev~\cite{Lap}, which uses
the fact that cuboids satisfy P\'olya's inequalities~\eqref{eq:Polya_conj} and the product structure of the domains.
Let $R'=(0, a_2)\times \dots \times (0, a_n)$ and write
{\allowdisplaybreaks
\begin{align}\label{eq:count_productdomain}
  N^\optD(\lambda, R) &= \sum_{k: \lambda_k(R)\leq \lambda} (\lambda-\lambda_k(R))^0 \notag\\
  &=
  \sum_{k, l : \lambda_k(R')+\lambda_l((0, a_1))\leq \lambda} (\lambda-\lambda_l((0, a_1))-\lambda_k(R'))^0 \notag\\
  &=
  \sum_{l: \lambda_l((0, a_1))\leq \lambda} \ \sum_{k: \lambda_k(R')\leq \lambda-\lambda_l((0, a_1))}((\lambda-\lambda_l((0, a_1)))-\lambda_k(R'))^0\\
  &=
  \sum_{l: \lambda_l((0, a_1))\leq \lambda} N^\optD((\lambda-\lambda_l((0, a_1)))_\limplus, R'),
\end{align}
where we use the convention ``$0^0=1$''. The above could be done with strict inequalities to avoid this issue, but to match~\eqref{e1}, \eqref{e2} we also wish to count the eigenvalues that are equal to $\lambda$.
Applying P\'olya's inequality for the counting function on $R'$, which says $N^\optD(\lambda, R') \leq \LT{0}{n-1}|R'|\lambda^{(n-1)/2}$ (see~\cite{PO}), yields that
\begin{align}\label{eq:dim_reduction_D}
  N^\optD(\lambda, R) &\leq
  \sum_{l: \lambda_l((0, a_1)) \leq \lambda} \LT{0}{n-1}|R'| (\lambda-\lambda_l((0, a_1)))_\limplus^{(n-1)/2} \notag\\
  &=
  \LT{0}{n-1}|R'| \Tr(-\Delta^\mathcal{D}_{(0, a_1)}-\lambda)_\limminus^{(n-1)/2}.
\end{align}
}

Analogously, with the only difference being that P\'olya's inequality goes in the opposite direction, one finds that
\begin{align}\label{eq:dim_reduction_N}
  N^\optN(\mu, R) &\geq
  \LT{0}{n-1}|R'| \Tr(-\Delta^\mathcal{N}_{(0, a_1)}-\mu)_\limminus^{(n-1)/2}.
\end{align}

  The Aizenman--Lieb Identity~\cite{AizenmanLieb} asserts that if $\gamma_1\geq 0$ and $\gamma_2>\gamma_1$, then, for $\eta \geq 0$,
  \begin{equation}\label{eq:AizenmanLieb}
    \Tr(-\Delta_\Omega-\eta)^{\gamma_2}_\limminus = B(1+\gamma_1, \gamma_2-\gamma_1)^{-1} \int_0^\infty \tau^{-1+\gamma_2-\gamma_1} \Tr(-\Delta_\Omega-(\eta-\tau))^{\gamma_1}_\limminus\, d\tau,
  \end{equation}
  where $B$ denotes the Euler Beta function:
  \begin{equation}
    B(x, y):= \int_0^1 t^{x-1}(1-t)^{y-1}\,dt.
  \end{equation}
  The identity follows immediately from linearity and that, for any $a\in \R$,
  \begin{equation}
    \int_0^\infty \tau^{-1+\gamma_2-\gamma_1}(a+\tau)_\limminus^{\gamma_1}\,d\tau= \int_0^{a_\limminus} \tau^{-1+\gamma_2-\gamma_1}(a+\tau)_\limminus^{\gamma_1}\,d\tau = a_\limminus^{\gamma_2}B(1+\gamma_1, \gamma_2-\gamma_1),
  \end{equation}
  by the change of variables $t=\frac{(a+\tau)_\limminus}{a_\limminus}$.

  Thus we can write the bounds~\eqref{eq:dim_reduction_D} and~\eqref{eq:dim_reduction_N} in the form
  \begin{align}\label{eq:AL_bound_D}
    N^\optD(\lambda, R) \leq \frac{\LT{0}{n-1}|R'|}{B(1+\gamma, \frac{n-1}{2}-\gamma)} \int_0^\lambda \tau^{-1+(n-1)/2-\gamma} \Tr(-\Delta^\mathcal{D}_{(0, a_1)}-(\lambda-\tau))_\limminus^{\gamma}\, d\tau,\\[4pt]
    N^\optN(\mu, R) \geq \frac{\LT{0}{n-1}|R'|}{B(1+\gamma, \frac{n-1}{2}-\gamma)} \int_0^\mu \tau^{-1+(n-1)/2-\gamma} \Tr(-\Delta^\mathcal{N}_{(0, a_1)}-(\mu-\tau))_\limminus^{\gamma}\, d\tau,\label{eq:AL_bound_N}
    \end{align}
  where we are free to choose $\gamma \in [0, (n-1)/2)$. By choosing suitable $\gamma$ and appropriate one-dimensional estimates it is possible to obtain a variety of bounds for the counting functions. The bounds that we make use of here are proven in the appendix (see Lemmas~\ref{lem:1D-Dirichlet bound} and~\ref{lem:1D-Neumann bound}).

For $n=3$ we do not use the Aizenman--Lieb Identity. Applying the bounds of Lemmas~\ref{lem:1D-Dirichlet bound} and~\ref{lem:1D-Neumann bound} to the one-dimensional traces of~\eqref{eq:dim_reduction_D} resp.~\eqref{eq:dim_reduction_N} yields the claimed bounds. For dimensions $n\geq 4$ choose $\gamma=1$ in~\eqref{eq:AL_bound_D} and~\eqref{eq:AL_bound_N} and apply Lemma~\ref{lem:1D-Dirichlet bound} resp.\ Lemma~\ref{lem:1D-Neumann bound}. Computing the resulting integrals one obtains the claimed bounds.

In the two-dimensional Neumann case a bound of the required
form was obtained in~\cite[Proposition~14]{LL}. Moreover, the two-dimensional Dirichlet case follows almost directly from Proposition~10 of the same paper. This proposition states that, for $\lambda \geq 1/a_1^2$,
\begin{equation}\label{eq:LLboundDirichlet}
  N^\optD(\lambda, R)\leq \frac{\lambda}{4\pi}- \frac{c\lambda^{1/2}}{a_1},
\end{equation}
for some constant $c>0$. We aim for a bound of the form $N^\optD(\lambda, R)\leq \frac{1}{4\pi}(\sqrt{\lambda}-b/a_1)^2$. Note that the bound is trivially true for $\lambda <\pi^2/a_1^2$.
Note also that for $ b \leq \pi$
the right-hand side is pointwise decreasing
in $b$, hence if it holds true for some $b_0$ it holds for all $b\in [0, b_0]$. Therefore, using~\eqref{eq:LLboundDirichlet} it suffices to prove that
\begin{equation}
  \frac{\lambda}{4\pi}- \frac{c \lambda^{1/2}}{ a_1}\leq \frac{\lambda}{4\pi}- \frac{b\lambda^{1/2}}{2\pi a_1}+ \frac{b^2}{4\pi a_1^2}
\end{equation}
for all $\lambda > \pi^2/a_1^2$, which is clearly true if and only if $b \leq 2\pi c$.
\end{proof}

% ------------------  extremal cuboids are unif bounded  --------------------

\subsection{Extremal cuboids are uniformly bounded.}\label{boundedness}

In this section we obtain a uniform lower bound for the shortest side-length of the extremal cuboids $R^\optD_k$ and $R^\optN_k$.

As the proof is almost precisely the same for the Dirichlet and the Neumann cases we only write out the former
in full. The only difference between the two cases is that an element of the proof in the Dirichlet case is not present in the proof of the Neumann result. This difference stems from the fact that in the Dirichlet case we have a three-term bound and so we need to bound the quantity that this extra term gives rise to.

For $n\geq 2$ let $R^\optD_k$, $k\geq 1$,
be a sequence of unit measure cuboids minimising $\lambda_k$, i.e.\ such that $\lambda_k(R^\optD_k)=\lambda_k^*$, and as usual we assume
that $a_{1, k}^*\leq \ldots \leq a_{n, k}^*$.
By optimality $\lambda_k^*\leq \lambda_k(Q)$ and so $\lambda_k^*-\eps<\lambda_k(Q)$, for any $0<\eps<1$, which implies that
\begin{equation}
 N^\optD(\lambda_k^*-\eps, Q) \leq k-1 < k \leq N^\optD(\lambda_k^*, R^\optD_k).
\end{equation}

The two-term asymptotics for the Dirichlet eigenvalue counting function on the cube (see~\cite{VI} or Section~\ref{count}) combined with Lemma~\ref{lem:D_3term_bound} then yield that
\begin{align}
  \LT{0}{n}(\lambda_k^*-\eps)^{n/2}&- \frac{\LT{0}{n-1}}{4}|\partial Q|(\lambda_k^*-\eps)^{(n-1)/2}+o((\lambda_k^*-\eps)^{(n-1)/2}) \\
&\leq \LT{0}{n}(\lambda_k^*)^{n/2} - \frac{c_1 b\LT{0}{n-1}}{a_{1, k}^*}(\lambda_k^*)^{(n-1)/2} + \frac{c_2 b^2\LT{0}{n-2}}{(a_{1, k}^*)^2}(\lambda_k^*)^{(n-2)/2}.
\end{align}
Rearranging and taking $\eps =\tfrac12$ we find that
\begin{align}
  \frac{b}{a_{1,k}^*}\biggl( 1 - \frac{c_2 b \LT{0}{n-2}(\lambda_k^*)^{-1/2}}{c_1 \LT{0}{n-1} a_{1,k}^*}\biggr) \leq \frac{n}{2 c_1} + o(1).
\end{align}
Since $\lambda_k^*=\lambda_k(R^\optD_k)\geq \lambda_1(R^\optD_k) > \pi^2(a_{1,k}^*)^{-2}$, we have that $-(\lambda_k^*)^{-1/2} \geq -a_{1,k}^*/\pi$. Hence
\begin{align}
  \frac{b}{a_{1,k}^*}\biggl(1 - \frac{c_2b \LT{0}{n-2}}{c_1\pi \LT{0}{n-1}}\biggr) \leq \frac{n}{2c_1} + o(1).
\end{align}

We now choose $b\in (0, b_0]$, where $b_0$ is as defined in Lemma~\ref{lem:D_3term_bound}, small enough so that the left-hand side is positive.
Then the above implies that there exists a $C>0$ such that
\begin{equation}\label{eb4}
\frac{1}{a_{1, k}^*} \leq C+o(1),
\end{equation}
which
in turn implies that
\begin{equation}\label{eb10}
a_{n, k}^* \leq \biggl(\frac{1}{a_{1, k}^*}\biggr)^{n-1} \leq
C^{n-1} + o(1).
\end{equation}
Thus $\liminf_{k\to \infty} a_{1,k}^* \geq 1/C >0$ and $\limsup_{k\to \infty} a_{n, k}^* <\infty$  so the side-lengths of a minimising sequence of cuboids are uniformly bounded away from zero and infinity.
For dimensions $n=2, 3$, the corresponding result was obtained, through a slightly different argument, in~\cite{MR3001382,vdBergGittins}.

To prove the corresponding result for the Neumann problem one can take the same approach. Observe that $N^\optN(\mu_k^*-\eps, R^\optN_k) \leq k-1 < k \leq N^\optN(\mu_k(Q), Q) \leq N^\optN(\mu_k^*, Q)$, for $k\geq 1$ and any $0<\eps<1$, apply the lower bound of Lemma~\ref{lem:N_2term_bound} to the left-hand side and expand the right-hand side using its two-term asymptotic expansion. Rearranging the obtained inequality yields a bound of the form~\eqref{eb4}.

% ------------------  Lattice counting arguments  --------------------

\subsection{Precise asymptotics for eigenvalue counting functions}\label{count}
Let $\lambda, \mu, r\geq 0$ and $E(r, R), N^\optD(\lambda, R)$ and $N^\optN(\mu, R)$ be as defined in Section~\ref{intro}. Assume that $R$ has bounded side-lengths so that the ellipsoid $E(r, R)$ has positive Gaussian curvature. In this section, we obtain two-term asymptotic expansions for $N^\optD(\lambda, R)$ and $N^\optN(\mu, R)$ with remainder estimates which are uniform in the side-lengths of $R$.
As the calculations for the Dirichlet and Neumann problems are almost identical, we will write out the argument in full only for the Dirichlet case and indicate what differences appear for the Neumann case.
Specifically we prove the following.

\begin{lemma}\label{lem:Asymptotic Expansion}
For $n \geq 2$ and $R= \prod_{i=1}^{n} (0, a_i) \subset \R^n$, with $a_i>0$,
\begin{align}
  N^\optD(\lambda, R) &= \LT{0}{n} |R| \lambda^{n/2}- \frac{\LT{0}{n-1}}{4} |\partial R|\lambda^{(n-1)/2}
   + O(\lambda^{\theta_n/2}),\label{em15D}\\
   N^\optN(\mu, R) &= \LT{0}{n} |R| \mu^{n/2}+ \frac{\LT{0}{n-1}}{4} |\partial R|\mu^{(n-1)/2}
   + O(\mu^{\theta_n/2}),\label{em15N}
\end{align}
as $\lambda, \mu \to \infty,$ where $\theta_n$ is as defined in~\eqref{def:theta_n}. Moreover, the remainder terms are uniform on any collection of cuboids with side-lengths contained in a compact subset of\/ $\R_\limplus$.
\end{lemma}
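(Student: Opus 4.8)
The plan is to reduce both asymptotic expansions to the lattice-point count appearing in Definition~\ref{def1}. Recall from \eqref{ei2} and \eqref{e1}–\eqref{e2} that $N^\optD(\lambda, R)$ counts points of $\Z^n$ with all coordinates $\geq 1$ inside the ellipsoid $E(\lambda, R)$, while $N^\optN(\mu, R)$ counts points with all coordinates $\geq 0$. The key observation is that the full lattice $\Z^n$ can be partitioned according to which coordinates vanish: a point $z$ with exactly the coordinates indexed by a subset $S \subset \{1,\dots,n\}$ equal to zero contributes, for the symmetric ellipsoid, to $2^{n-|S|}$ sign-patterns, exactly one of which (all non-negative entries) is counted by $N^\optN$ on the corresponding lower-dimensional ellipsoid. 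Thus, writing $P(r,R) := \#\{z\in\Z^n : \sum a_j^{-2} z_j^2 \leq r/\pi^2\}$ for the full count, inclusion–exclusion over faces gives
\begin{equation}
  P(\lambda, R) = \sum_{S\subseteq\{1,\dots,n\}} 2^{n-|S|} N^\optN_S(\lambda, R_S),
\end{equation}
where $R_S = \prod_{i\notin S}(0,a_i)$ and $N^\optN_S$ is the Neumann-type count in $\R^{n-|S|}$ (with the $0$-dimensional term interpreted as $1$). Similarly, $N^\optD(\lambda,R)$ is exactly the $S=\varnothing$ term obtained by restricting to strictly positive coordinates, so an analogous inclusion–exclusion expresses $N^\optD$ in terms of the same quantities. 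These two linear systems are triangular and invertible, so it suffices to establish the claimed two-term expansion for $P(\lambda,R)$ itself; the expansions for $N^\optD$ and $N^\optN$ then follow by taking the appropriate signed combination of the lower-dimensional expansions, and one checks that the volume terms combine to $\LT{0}{n}|R|\lambda^{n/2}$, the top-dimensional surface terms combine to $\pm\frac{\LT{0}{n-1}}{4}|\partial R|\lambda^{(n-1)/2}$ (with opposite signs in the two cases, coming from whether the codimension-one faces are added or subtracted), and all lower-order terms are absorbed into $O(\lambda^{(n-2)/2}) \subseteq O(\lambda^{\theta_n/2})$ since $\theta_n \geq n-2$.

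For $P(\lambda,R)$ itself, Definition~\ref{def1} with $t = \pi^{-1}\sqrt\lambda$ gives directly
\begin{equation}
  P(\lambda, R) = \omega_n\Bigl(\frac{\sqrt\lambda}{\pi}\Bigr)^n \prod_{i=1}^n a_i + O\bigl((\sqrt\lambda/\pi)^{\theta_n}\bigr) = \omega_n \pi^{-n}|R|\,\lambda^{n/2} + O(\lambda^{\theta_n/2}),
\end{equation}
and one verifies the identity $\omega_n\pi^{-n} = \LT{0}{n}$ from the definition of the semiclassical constant and $\omega_n = \pi^{n/2}/\Gamma(\tfrac n2+1)$. The uniformity claim is immediate: Definition~\ref{def1} already provides a remainder that is uniform for the $a_i$ in a compact subset of $\R_\limplus$, and the face terms $R_S$ have side-lengths drawn from the same compact set, so finitely many applications of the definition (one per subset $S$, in each relevant dimension $n-|S| \geq 1$) preserve uniformity. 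The Neumann case differs from the Dirichlet case only in the combinatorial signs described above.

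The main technical point — really the only one requiring care — is bookkeeping the inclusion–exclusion so that the surface term emerges with the correct constant $\LT{0}{n-1}/4$ and the correct sign in each of the two cases, and confirming that every term of order strictly below $(n-1)/2$ is genuinely $O(\lambda^{\theta_n/2})$; this uses $n-2 \leq \theta_n$, which holds in all dimensions by the discussion following Definition~\ref{def1} (indeed $\theta_n = n-2$ is sharp for $n\geq 5$, and for $n=2,3,4$ one has $\theta_n > n-2$). No genuinely hard estimate is needed beyond Definition~\ref{def1}, since that definition packages exactly the lattice-point asymptotics — with the crucial uniform-in-$a_i$ remainder — that the proof consumes.
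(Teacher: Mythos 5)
Your approach — relating the Dirichlet and Neumann counting functions to the full lattice count $P$ by inclusion--exclusion over subsets of vanishing coordinates — is structurally the same as the paper's. The paper writes out the inclusion--exclusion explicitly on the sets $E_i(r)=E(r)\cap\{x_i=0\}$, while you package the same combinatorics as a triangular system on the subset lattice and invert it; these are equivalent. Your verification of the uniformity and your handling of the remainder exponents ($\theta_{n-1}<n-2\leq\theta_n$) also match the paper.

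However, there are two concrete errors in the write-up. First, your key identity
$P(\lambda, R) = \sum_{S} 2^{n-|S|} N^\optN_S(\lambda, R_S)$
is wrong: the subscript should be $\mathcal D$, not $\mathcal N$. If you partition $\Z^n\cap E$ according to the set $S$ of coordinates that vanish exactly, then for a point in that stratum the remaining $n-|S|$ coordinates are all \emph{nonzero}, so the orbit representative with non-negative entries has those coordinates strictly positive and is counted by the \emph{Dirichlet} count $N^\optD_S$ on $R_S$. Using $N^\optN_S$ double-counts: $N^\optN_S$ also includes lattice points with further zero coordinates, and these already belong to a larger $S'$. A quick sanity check in dimension one shows the problem: with $P=2m+1$ (points $-m,\dots,m$), we have $N^\optN=m+1$ and the right-hand side of your formula would be $2(m+1)+1=2m+3\neq P$, whereas $N^\optD=m$ gives $2m+1=P$ correctly. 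With $N^\optD_S$ in place the M\"obius inversion on the Boolean lattice gives $2^n N^\optD = \sum_S (-1)^{|S|}P_S$ (and $2^n N^\optN = \sum_S P_S$), which is exactly what the paper's inclusion--exclusion produces, and from which the claimed coefficients follow.

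Second, the asserted identity $\omega_n\pi^{-n} = \LT{0}{n}$ is false. In fact $\LT{0}{n}=\frac{1}{(4\pi)^{n/2}\Gamma(\frac n2+1)}=\frac{\omega_n}{2^n\pi^n}$, so $\omega_n\pi^{-n}=2^n\LT{0}{n}$. The missing $2^n$ is precisely the factor picked up when inverting the triangular system (equivalently, the number of orthants). Your overall bookkeeping is still consistent — the leading coefficient of $P$ is $\omega_n\pi^{-n}|R|$ and the leading coefficient of $N^\optD$ and $N^\optN$ is $\LT{0}{n}|R|$ — but the sentence as written asserts a false identity and should say instead that $\omega_n/(2^n\pi^n)=\LT{0}{n}$, with the $2^n$ supplied by the inversion.
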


Similar two-term asymptotic expansions for the counting function of the Dirichlet, resp.\ Neumann, Laplacian are
known to hold for more general domains than cuboids (see, for example,~\cite{VI}). However, to obtain the orders of convergence in Theorems~\ref{T1},~\ref{TN1} and~\ref{T2} we require a better remainder estimate than what is possible in general.

\begin{proof}[Proof of Lemma~\ref{lem:Asymptotic Expansion}]
The proof  is based on the inclusion-exclusion principle. For notational simplicity, in what follows we will write $N^\optD(\lambda), N^\optN(\mu)$ and $E(r)$ with the dependence on $R$ being implicit.

By symmetry of the ellipsoid $E(r)$ we have that
\begin{equation}\label{eq:lattice split}
  \#\{\Z^n\cap E(r)\} = 2^n \#\{\N^n\cap E(r)\}+\#\{(x_1, \ldots, x_n)\in\Z^n\cap E(r): \exists i \mbox{ for which } x_i=0\}.
\end{equation}
Let $E_i(r)$ denote the set $E(r)\cap \{x_i=0\}$. As the second term in the right-hand side of~\eqref{eq:lattice split} is the union of the sets $E_i(r)\cap \Z^n$ we can apply the inclusion--exclusion principle
\begin{equation}\label{eq:inclusion-exclusion}
  \#\{\cup_{i=1}^n (E_i(r)\cap \Z^n)\}
  =
  \sum_{k=1}^n (-1)^{k+1}\biggl(\sum_{1\leq i_1<\ldots <i_k\leq n}\#\{E_{i_1}(r)\cap\ldots \cap E_{i_k}(r)\cap \Z^n\}\biggr).
\end{equation}
The set $E_{i_1}(r)\cap \ldots \cap E_{i_k}(r)$ is naturally identified with an ellipsoid in $\R^{n-k}$, namely
\begin{equation}
  E_{I}(r)=\Bigl\{(x_1, \ldots, x_{n-k})\in \R^{n-k}: \sum_{j\notin I} \frac{x_j^2}{a_j^2}\leq \frac{r^2}{\pi^2}\Bigr\},
\end{equation}
where $I=\{i_1, \ldots, i_k\}$. Moreover, we have that
\begin{equation}
  \#\{E_{i_1}(r)\cap \ldots \cap E_{i_k}(r)\cap \Z^n\}=\#\{E_I(r)\cap \Z^{n-k}\}.
\end{equation}

Since $N^\optD(R, \lambda)=\#\{\N^n\cap E(\lambda)\}$, we find from~\eqref{eq:lattice split},~\eqref{eq:inclusion-exclusion} and~\eqref{def:theta_n} that
\begin{align}
  N^\optD(R, \lambda) &= \frac{\omega_n \lambda^{n/2}}{2^n \pi^n} - \frac{\omega_{n-1}\lambda^{(n-1)/2}}{2^n \pi^{n-1}}\sum_{i=1}^n \prod_{j\neq i} a_j+ O(\lambda^{\theta_n/2}+\lambda^{\theta_{n-1}/2}+\lambda^{(n-2)/2})\\
  &=
  \LT{0}{n}\lambda^{n/2}-\frac{\LT{0}{n-1}}{4}|\partial R|\lambda^{(n-1)/2}+ O(\lambda^{\theta_n/2}).
\end{align}
In the final step we used that $2\sum_i \prod_{j\neq i}a_j = |\partial R|$ and $\theta_m\in [m-2, m-1)$ for all $m$. The uniformity of the remainder follows directly from Definition~\ref{def1}.

To obtain the corresponding expansion in the Neumann case, one writes the lattice points in $E(r)$ as the union of reflected copies of the lattice points in $E(r)\cap (\N\cup \{0\})^n$ and then applies the inclusion--exclusion principle to this union.
\end{proof}

% ---------------  Geometric convergence ---------------

\section{Geometric convergence \& spectral stability}\label{convergence&stability}
In this section, we prove Theorems~\ref{T1},~\ref{TN1} and~\ref{T2}. As the proofs of the Dirichlet and the Neumann cases are almost identical, we again write out the former case in full and indicate the differences which occur in the proof of the latter.

Since the minimisers $R_k^\optD$, respectively the maximisers $R_k^\optN$,
need not be unique, we consider an arbitrary subsequence of such extremal sets.
By the results obtained in Section~\ref{boundedness} (or the corresponding statements in~\cite{MR3001382,vdBergGittins} and~\cite{vdBBG}), we know that the extremal cuboids in any dimension are uniformly bounded in $k$, and thus the remainder terms in~\eqref{em15D} and~\eqref{em15N} are uniform with respect to $R_k^\optD$ and $R^\optN_k$, respectively.

\begin{proof}[Proof of Theorems~\ref{T1} and~\ref{TN1}]
  As in the proof of the uniform boundedness, $N(\lambda_k^*-\eps, Q) < k \leq N(\lambda_k^*, R_k^\optD)$, for any $0<\eps <1$.
  Plugging in the asymptotic expansion~\eqref{em15D}
  on both sides, we have that
  \begin{align}
      \LT{0}{n}(\lambda_k^*-\eps)^{n/2}&-\frac{\LT{0}{n-1}}{4}|\partial Q|(\lambda_k^*-\eps)^{(n-1)/2}-O((\lambda_k^*-\eps)^{\theta_n/2}) \\
      & \leq \LT{0}{n}(\lambda_k^*)^{n/2}-\frac{\LT{0}{n-1}}{4}|\partial R_k^\optD|(\lambda_k^*)^{(n-1)/2}+O((\lambda_k^*)^{\theta_n/2}). \label{Dlsqueeze}
  \end{align}
  Rearranging and choosing $\eps = \tfrac12$, we obtain that
  \begin{equation}\label{ec4}
    |\partial R_k^\optD|-|\partial Q| \leq O((\lambda_k^*)^{(\theta_n-(n-1))/2}) = O(k^{(\theta_n-(n-1))/n}),
  \end{equation}
  which, when combined with the isoperimetric inequality for cuboids,
  implies that
  \begin{equation}\label{eq:convergence}
  |\partial R_k^\optD|= \sum_{i=1}^n\frac{2}{a_{i, k}^*} = 2n + O(k^{(\theta_n-(n-1))/n}).
  \end{equation}
  By the arithmetic -- geometric means inequality, with $a_{n, k}^*=1+\delta_k > 1$, we find that
  \begin{equation}\label{eq:AMGM1}
    (n-1)(1+\delta_k)^{1/(n-1)} +\frac{1}{1+\delta_k} \leq \sum_{i=1}^n \frac{1}{a_{i, k}^*}.
  \end{equation}
  Then, by \eqref{eq:convergence} and \eqref{eq:AMGM1},
  \begin{equation}\label{eq:rate}
  (n-1)(1+\delta_k)^{n/(n-1)} + 1 \leq n + n\delta_k + O(k^{(\theta_n-(n-1))/n}).
  \end{equation}
  For each $n \geq 2$, we know by the results in Section~\ref{boundedness} (or from~\cite{MR3001382,vdBergGittins}) that there exists $T>0$ so that $\delta_k^*=a_{n, k}^*-1 \leq T$.
  Hence, letting $c(T)=\frac{(1+T)^{n/(n-1)}-1-\frac{n}{n-1} T}{T^2}>0$,  we have that
  \begin{equation}
  (1+\delta_k)^{n/(n-1)} \geq 1 + \frac{n}{n-1}\delta_k + c(T)\delta_k^2.
  \end{equation}
  By substituting this into~\eqref{eq:rate}, we deduce that $\delta_k = O(k^{(\theta_n-(n-1))/(2n)})$.

  For the Neumann case one can argue almost identically by observing (as in the proof of the uniform boundedness of $R^\optN_k$) that, for any $0 < \eps < 1$,
  \begin{equation*}
    N^\optN(\mu_k^*- \eps, R^\optN_k) \leq N^\optN(\mu_k^*, Q).\qedhere
  \end{equation*}
\end{proof}

\begin{remark}
We remark that if we restrict the collection of cuboids to a sub-collection, then the above arguments prove that any
sequence of minimising, resp.\ maximising, cuboids converges to the cuboid of smallest perimeter in this sub-collection (in particular, replace $Q$ by this cuboid in \eqref{ec4}). For example, in the sub-collection consisting of all unit-measure cuboids in $\R^n$ of the form $\prod_{i=1}^n(0, a_i)$ such that $0< a_1 \leq \dots \leq a_n$ and $c\pps a_1 = a_2$, with $c \geq 1$, any sequence of optimisers converges to the cuboid with $a_1 = c^{-(n-1)/n}$ and $a_2 = \dots = a_n = c^{1/n}$.
\end{remark}

We now turn to the question of spectral stability and the proof of Theorem~\ref{T2}.

\begin{proof}[{Proof of Theorem~\ref{T2}.}]
As in the proof of Theorems~\ref{T1} and~\ref{TN1}, we have that, for any $0<\eps <1$, $N^\optD_k(\lambda_k^*-\eps, Q)\leq k \leq N^\optD(\lambda_k^*, R^\optD_k)$. By the asymptotic expansion~\eqref{em15D} we thus find that
\begin{align}
      \LT{0}{n}(\lambda_k^*-\eps)^{n/2}&-\frac{\LT{0}{n-1}}{4}|\partial Q|(\lambda_k^*-\eps)^{(n-1)/2}-O((\lambda_k^*-\eps)^{\theta_n/2}) \\
      \leq k & \leq \LT{0}{n}(\lambda_k^*)^{n/2}-\frac{\LT{0}{n-1}}{4}|\partial R_k^\optD|(\lambda_k^*)^{(n-1)/2}+O((\lambda_k^*)^{\theta_n/2}). \label{Dlsqueezek}
  \end{align}
By the isoperimetric inequality for cuboids this
also holds with $|\partial R_k^\optD|$ replaced by $|\partial Q|$.

Choosing $\eps = \tfrac12$ yields that
\begin{equation}\label{Dlcount}
  k = \LT{0}{n}(\lambda_k^*)^{n/2} - \frac{\LT{0}{n-1}}{4}|\partial Q|(\lambda_k^*)^{(n-1)/2}+ O((\lambda_k^*)^{\theta_n/2}),
\end{equation}
as $k \to \infty$. From which we can conclude that
\begin{equation}\label{Dlopt}
  \lambda_k^* = 4\pi \Gamma\Bigl(\frac{n}{2}+1\Bigr)^{2/n}k^{2/n}
  +\frac{2\pi \Gamma(\frac{n}{2}+1)^{1+1/n}}{n \Gamma(\frac{n+1}{2})}|\partial Q|k^{1/n}+ O(k^{(\theta_n -(n-2))/n}),
\end{equation}
as $k \to \infty$. Now~\eqref{Dlcount} is the same two-term expansion as that for $N(\lambda, Q)$, so~\eqref{Dlopt} must agree with the two-term expansion for $\lambda_k(Q)$. Thus we obtain that $|\lambda_k(Q) - \lambda_k^* | = O(k^{(\theta_n -(n-2))/n})$ as $k \to \infty$.

The approach to prove the Neumann case is identical except that one instead uses that, for any $0<\eps<1$,
\begin{equation*}
    N^\optN(\mu_k^*- \eps, R^\optN_k) \leq k\leq  N^\optN(\mu_k^*, Q).
    \qedhere
\end{equation*}
\end{proof}

% ------------------  Riesz means and sums ------------------

\section{Riesz means \& eigenvalue averages}\label{Sums_Means}

Given the techniques and bounds obtained above, it is not difficult to obtain the corresponding shape optimisation results for the following problems:
\begin{enumerate}[label=(\roman*)]
  \item\label{prob:RieszDN} For $\gamma \geq 0$ and $\lambda, \mu \geq 0$,
  \begin{align}
     \sup\bigl\{\Tr(-\Delta^\mathcal{D}_R-\lambda)^\gamma_\limminus: R\subset \R^n \mbox{ cuboid}, |R|=1\bigr\},\\
     \inf\bigl\{\Tr(-\Delta^\mathcal{N}_R-\mu)^\gamma_\limminus: R\subset \R^n \mbox{ cuboid}, |R|=1\bigr\}.
  \end{align}

  \item\label{prob:averageD} For $k\in \N$,
  \begin{equation}
    \inf \Bigl\{ \frac{1}{k}\sum_{i=1}^k \lambda_i(R) : R\subset \R^n \mbox{ cuboid}, |R|=1\Bigr\}.
  \end{equation}
\end{enumerate}

For the Riesz means we prove that:
\begin{proposition}\label{prop:Riesz means Dirichlet}
  Let $n\geq 2$ and $\gamma \geq 0$. For $\lambda >0,$ let $R_\lambda^\optD$ denote any cuboid which maximises $\Tr(-\Delta_R^\optD-\lambda)_\limminus^\gamma$ amongst all cuboids $R$ of unit measure. Then as $\lambda \to \infty$ we have that
  \begin{equation}
    a_{n,\lambda}^* = 1+O(\lambda^{(\theta_n-(n-1))/4}).
  \end{equation}
\end{proposition}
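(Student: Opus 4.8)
The plan is to imitate the proof of Theorems~\ref{T1} and~\ref{TN1}, replacing the eigenvalue counting function by the Riesz mean of order $\gamma$. The first step is to obtain a two-term asymptotic expansion, with a good remainder estimate uniform over cuboids with bounded side-lengths, for $\Tr(-\Delta_R^\optD-\lambda)_\limminus^\gamma$. This can be produced from Lemma~\ref{lem:Asymptotic Expansion} by invoking the Aizenman--Lieb identity~\eqref{eq:AizenmanLieb}: writing the Riesz mean as an integral against $N^\optD(\tau, R)$ (the case $\gamma_1=0$) and substituting the expansion~\eqref{em15D}, one gets
\begin{equation*}
  \Tr(-\Delta_R^\optD-\lambda)_\limminus^\gamma = \LT{\gamma}{n}|R|\lambda^{n/2+\gamma} - \frac{\LT{\gamma}{n-1}}{4}|\partial R|\lambda^{(n-1)/2+\gamma} + O(\lambda^{\theta_n/2+\gamma}),
\end{equation*}
uniformly on compact sets of side-lengths, with an analogous expansion (with $+$ before the perimeter term) in the Neumann case. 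One must first record that a maximising cuboid $R_\lambda^\optD$ exists and that its side-lengths are uniformly bounded away from $0$ and $\infty$ as $\lambda\to\infty$; this follows exactly as in Section~\ref{boundedness}, comparing $\Tr(-\Delta_{R_\lambda^\optD}^\optD-\lambda)_\limminus^\gamma \geq \Tr(-\Delta_Q^\optD-\lambda)_\limminus^\gamma$ and using Lemma~\ref{lem:D_3term_bound} lifted to Riesz means (again via~\eqref{eq:AizenmanLieb}, which is what the final remark in Section~\ref{upperbound} was arranging) to force $1/a_{1,\lambda}^*$ to stay bounded.

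The second step is the squeeze argument. By optimality, $\Tr(-\Delta_Q^\optD-\lambda)_\limminus^\gamma \leq \Tr(-\Delta_{R_\lambda^\optD}^\optD-\lambda)_\limminus^\gamma$; substituting the two-term expansion above on both sides and using that the leading terms coincide (since $|Q|=|R_\lambda^\optD|=1$), one finds
\begin{equation*}
  \frac{\LT{\gamma}{n-1}}{4}|\partial Q|\lambda^{(n-1)/2+\gamma} \geq \frac{\LT{\gamma}{n-1}}{4}|\partial R_\lambda^\optD|\lambda^{(n-1)/2+\gamma} - O(\lambda^{\theta_n/2+\gamma}),
\end{equation*}
hence $|\partial R_\lambda^\optD| - |\partial Q| \leq O(\lambda^{(\theta_n-(n-1))/2})$. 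Since by the isoperimetric inequality for cuboids $|\partial R_\lambda^\optD|\geq |\partial Q|$, this also gives $|\partial R_\lambda^\optD| = 2n + O(\lambda^{(\theta_n-(n-1))/2})$. From here the passage to the side-length estimate is identical to the argument following~\eqref{eq:convergence}: writing $a_{n,\lambda}^* = 1+\delta_\lambda$, the arithmetic--geometric mean inequality gives $(n-1)(1+\delta_\lambda)^{n/(n-1)}+1 \leq \sum_i 1/a_{i,\lambda}^* = n + n\delta_\lambda + O(\lambda^{(\theta_n-(n-1))/2})$, and the uniform bound $\delta_\lambda \leq T$ together with the strict convexity estimate $(1+\delta_\lambda)^{n/(n-1)} \geq 1 + \tfrac{n}{n-1}\delta_\lambda + c(T)\delta_\lambda^2$ yields $\delta_\lambda^2 = O(\lambda^{(\theta_n-(n-1))/2})$, i.e.\ $a_{n,\lambda}^* = 1 + O(\lambda^{(\theta_n-(n-1))/4})$, as claimed.

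I expect the only real work to be in the first step: making sure the Aizenman--Lieb lifting of both the asymptotic expansion (Lemma~\ref{lem:Asymptotic Expansion}) and the three-term upper bound (Lemma~\ref{lem:D_3term_bound}) goes through with remainders that are genuinely uniform over the relevant family of cuboids, and checking that the resulting error exponent is exactly $\theta_n/2 + \gamma$ and not something worse after integrating $\tau^{\gamma-1}$ against an $O(\tau^{\theta_n/2})$ remainder near $\tau = 0$ (this is harmless since $\theta_n \geq n-2 \geq 0$, so the integral converges, but it should be noted). The boundedness step also needs the caveat, present in Section~\ref{boundedness}, that the three-term Dirichlet bound forces a smallness choice of the tuning parameter $b$; once that is in place the rest is a routine transcription of the eigenvalue argument. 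The Neumann analogue (Proposition~\ref{prop:Riesz means Neumann}) is proved the same way with the inequalities reversed and Lemma~\ref{lem:N_2term_bound} in place of Lemma~\ref{lem:D_3term_bound}.
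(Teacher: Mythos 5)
Your proposal is correct and follows essentially the same route as the paper, which disposes of this proposition in a few lines by noting that the argument is ``completely analogous'' to the eigenvalue case once Lemmas~\ref{lem:D_3term_bound} and~\ref{lem:Asymptotic Expansion} have been lifted via the Aizenman--Lieb identity~\eqref{eq:AizenmanLieb}. The details you spell out — the uniformity of the lifted remainder $O(\lambda^{\gamma+\theta_n/2})$, the convergence of the Aizenman--Lieb integral near the endpoint, the smallness choice of $b$ in Lemma~\ref{lem:3_term_bound_RieszD}, and the transcription of the arithmetic--geometric squeeze from~\eqref{eq:AMGM1}--\eqref{eq:rate} — are exactly the content the paper leaves implicit.
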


\begin{proposition}\label{prop:Riesz means Neumann}
  Let $n\geq 2$ and $\gamma \geq 0$. For $\mu>0,$ let $R_\mu^\optN$ denote any cuboid which minimises $\Tr(-\Delta_R^\optN-\mu)_\limminus^\gamma$ amongst all cuboids $R$ of unit measure. Then as $\mu \to \infty$ we have that
  \begin{equation}
    a_{n,\mu}^* = 1+O(\mu^{(\theta_n-(n-1))/4}).
  \end{equation}
\end{proposition}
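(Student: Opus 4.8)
The plan is to repeat the proof of Theorem~\ref{TN1} with the eigenvalue counting function replaced by the Riesz mean, transferring the bounds and asymptotics already in hand through the Aizenman--Lieb identity. For $\gamma>0$, taking $\gamma_1=0$ and $\gamma_2=\gamma$ in~\eqref{eq:AizenmanLieb} gives
\begin{equation*}
  \Tr(-\Delta^\optN_R-\mu)_\limminus^\gamma = \gamma\int_0^\mu (\mu-t)^{\gamma-1}N^\optN(t, R)\,dt,
\end{equation*}
while for $\gamma=0$ the Riesz mean is $N^\optN(\mu,R)$ itself. Inserting the two-term expansion of Lemma~\ref{lem:Asymptotic Expansion} (and bounding $N^\optN(t,R)=O(1)$ on a bounded initial interval, which contributes only a lower-order error) and evaluating the resulting Beta integrals — which recombine the constants $\LT{0}{m}$ into $\LT{\gamma}{m}$ — yields
\begin{equation*}
  \Tr(-\Delta^\optN_R-\mu)_\limminus^\gamma = \LT{\gamma}{n}|R|\mu^{\gamma+n/2}+\frac{\LT{\gamma}{n-1}}{4}|\partial R|\mu^{\gamma+(n-1)/2}+O(\mu^{\gamma+\theta_n/2}),
\end{equation*}
with the remainder uniform over any family of cuboids whose side-lengths lie in a fixed compact subset of $\R_\limplus$. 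Integrating the lower bound of Lemma~\ref{lem:N_2term_bound} the same way gives, for all $\mu\geq 0$ and some $c_1'>0$,
\begin{equation*}
  \Tr(-\Delta^\optN_R-\mu)_\limminus^\gamma \geq \LT{\gamma}{n}\mu^{\gamma+n/2}+\frac{c_1'\LT{\gamma}{n-1}}{a_1}\mu^{\gamma+(n-1)/2}.
\end{equation*}

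Next I would establish that $R_\mu^\optN$ exists and that its side-lengths are bounded above and below independently of $\mu$, exactly in the spirit of Section~\ref{boundedness}. The map $(a_1,\dots,a_{n-1})\mapsto \Tr(-\Delta^\optN_R-\mu)_\limminus^\gamma$ is continuous when $\gamma>0$ (and integer-valued, hence attaining its infimum on the relevant compact region, when $\gamma=0$), while the lower bound above forces it to tend to $+\infty$ as $a_1=(a_2\cdots a_n)^{-1}\to 0$; since its value at the cube is finite, the infimum is attained by a non-degenerate cuboid. Comparing the value at $R_\mu^\optN$ with that at $Q$ and using the lower bound then gives $1/a_{1,\mu}^*\leq C$ for a constant $C$ independent of $\mu$ (for all large $\mu$), whence $a_{n,\mu}^*\leq C^{n-1}$; so any sequence of minimisers has side-lengths confined to a fixed compact subset of $\R_\limplus$, and the expansion of the first paragraph applies to $R_\mu^\optN$ with uniform remainder.

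Finally I would run the squeeze as in the proof of Theorems~\ref{T1} and~\ref{TN1}, but more simply, since here a single inequality suffices. By minimality $\Tr(-\Delta^\optN_{R_\mu^\optN}-\mu)_\limminus^\gamma\leq \Tr(-\Delta^\optN_Q-\mu)_\limminus^\gamma$; substituting the two-term expansion on both sides, the leading terms cancel (both cuboids have unit measure), and dividing by $\mu^{\gamma+(n-1)/2}$ gives $|\partial R_\mu^\optN|-|\partial Q|\leq O(\mu^{(\theta_n-(n-1))/2})$. Together with the isoperimetric inequality for cuboids this pins down $|\partial R_\mu^\optN|$, and the arithmetic--geometric mean argument used after~\eqref{eq:convergence} — writing $a_{n,\mu}^*=1+\delta_\mu$, using the uniform bound $\delta_\mu\leq T$ just obtained and $(1+\delta)^{n/(n-1)}\geq 1+\tfrac{n}{n-1}\delta+c(T)\delta^2$ — upgrades this to $\delta_\mu^2=O(\mu^{(\theta_n-(n-1))/2})$, i.e.\ $a_{n,\mu}^*=1+O(\mu^{(\theta_n-(n-1))/4})$. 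Proposition~\ref{prop:Riesz means Dirichlet} follows the same way, maximising instead of minimising and using Lemma~\ref{lem:D_3term_bound} and its integrated form in place of Lemma~\ref{lem:N_2term_bound} for the boundedness step.

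The closing argument is essentially a verbatim transcription of the proof of Theorem~\ref{TN1} with $\lambda$ replaced by $\mu$ and exponents shifted by $\gamma$, so the main point requiring care is the transfer in the first two paragraphs: one must verify that the remainder in Lemma~\ref{lem:Asymptotic Expansion} survives integration against $(\mu-t)^{\gamma-1}$ uniformly in the side-lengths — in particular handling $t$ near $0$, where that lemma does not apply, and $t$ near $\mu$, where the kernel is singular for $\gamma<1$ but still integrable — and that the lower bound of Lemma~\ref{lem:N_2term_bound} genuinely drives the Riesz mean to infinity along degenerating cuboids for every $\gamma\geq 0$.
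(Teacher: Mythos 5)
Your proposal is correct and follows essentially the same route as the paper: lift the two-term bound of Lemma~\ref{lem:N_2term_bound} and the expansion of Lemma~\ref{lem:Asymptotic Expansion} through the Aizenman--Lieb identity (the former reproducing Lemma~\ref{lem:2_term_bound_RieszN}), use the lifted lower bound against the expansion at $Q$ to get uniform non-degeneracy of the minimisers, and then run the perimeter squeeze plus AM--GM argument from the proof of Theorem~\ref{TN1}. You spell out more of the bookkeeping than the paper — which mostly refers back to Sections~\ref{boundedness} and~\ref{convergence&stability} and to~\cite{SL} — but the ideas, exponents, and intermediate inequalities are the same.
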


In~\cite{F} Freitas studied problem~\ref{prob:averageD} in the more general setting of minimising amongst all bounded, open sets of fixed measure, and obtained the leading order behaviour of the extremal values as $k \to \infty$. By utilising a connection between Riesz means of order $\gamma=1$ and the eigenvalue averages, we prove here that:
\begin{proposition}\label{prop:Extremal averages}
  Let $n\geq 2$. For $k\in \N,$ let $\,\overline{\!R}_{k}^{\optD}$ denote any cuboid which minimises the average $\frac{1}{k}\sum_{i=1}^k\lambda_i(R)$ amongst all cuboids $R$ of unit measure. Then as $k \to \infty$ we have that
  \begin{equation}
    \bar{a}_{n,k}^* = 1+O(k^{(\theta_n-(n-1))/(2n)}).
  \end{equation}
\end{proposition}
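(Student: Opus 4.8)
The plan is to reduce the minimisation of the eigenvalue average $\frac1k\sum_{i=1}^k\lambda_i(R)$ to a shape optimisation problem for the Riesz mean $\Tr(-\Delta^\optD_R-\lambda)_\limminus$ of order $\gamma=1$, and then to invoke the analogue of Lemma~\ref{lem:Asymptotic Expansion} at the level of Riesz means together with the argument already used for Theorems~\ref{T1} and~\ref{TN1}. The key point is the Legendre-type duality between the sum of the first $k$ eigenvalues and the Riesz mean: for any domain $R$,
\begin{equation}
  \sum_{i=1}^k\lambda_i(R) = \inf_{\lambda\geq 0}\Bigl(k\lambda + \Tr(-\Delta^\optD_R-\lambda)_\limminus\Bigr),
\end{equation}
since $\Tr(-\Delta^\optD_R-\lambda)_\limminus=\sum_i(\lambda-\lambda_i(R))_\limplus$ and the infimum over $\lambda$ is attained for $\lambda\in[\lambda_k(R),\lambda_{k+1}(R)]$. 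Dividing by $k$ one sees that minimising the average over cuboids of unit measure is, up to the choice of the auxiliary parameter $\lambda$, the same as minimising $\Tr(-\Delta^\optD_R-\lambda)_\limminus$ over the same collection; one has to be a little careful because the optimal $\lambda$ depends on $R$, but since we already know (Section~\ref{boundedness}, applied as in Proposition~\ref{prop:Riesz means Dirichlet}) that extremal cuboids are uniformly bounded and that the extremal Riesz mean has a two-term asymptotic expansion, the optimal $\lambda=\lambda_k^{\mathrm{avg}}$ will satisfy $\lambda_k^{\mathrm{avg}}=4\pi\Gamma(\tfrac n2+1)^{2/n}k^{2/n}(1+o(1))$, which is all we need to convert $\lambda$-rates into $k$-rates.

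Concretely, I would proceed as follows. First I would record the uniform asymptotic expansion for the Dirichlet Riesz mean of order $\gamma=1$ on a cuboid with bounded side-lengths, namely
\begin{equation}
  \Tr(-\Delta^\optD_R-\lambda)_\limminus = \LT{1}{n}|R|\lambda^{(n+2)/2} - \frac{\LT{1}{n-1}}{4}|\partial R|\lambda^{(n+1)/2} + O(\lambda^{(\theta_n+2)/2}),
\end{equation}
uniformly for side-lengths in a compact subset of $\R_\limplus$; this follows from Lemma~\ref{lem:Asymptotic Expansion} by integrating the counting-function expansion against $d\lambda$ (equivalently via the Aizenman--Lieb identity \eqref{eq:AizenmanLieb}), exactly as in the proof of Proposition~\ref{prop:Riesz means Dirichlet}. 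Second, using the uniform boundedness of the minimising cuboids $\,\overline{\!R}_k^\optD$ — which is obtained from Lemma~\ref{lem:D_3term_bound} after lifting to Riesz means, as in Section~\ref{Sums_Means} — I would plug this expansion, evaluated at $\lambda=\lambda_k^{\mathrm{avg}}$, into the Legendre identity, compare $\,\overline{\!R}_k^\optD$ with the cube $Q$, and extract the perimeter bound $|\partial\,\overline{\!R}_k^\optD|-|\partial Q| = O((\lambda_k^{\mathrm{avg}})^{(\theta_n-(n-1))/2}) = O(k^{(\theta_n-(n-1))/n})$, precisely as in \eqref{ec4}. Third, the passage from this perimeter estimate to $\bar a_{n,k}^*=1+O(k^{(\theta_n-(n-1))/(2n)})$ is the arithmetic--geometric-mean argument of \eqref{eq:AMGM1}--\eqref{eq:rate}, which is purely elementary and identical to the one already carried out; the square-root loss in the exponent comes, as before, from the quadratic vanishing of the perimeter functional at the cube.

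The main obstacle is making the reduction step rigorous, i.e.\ controlling the dependence of the optimal auxiliary parameter $\lambda=\lambda_k^{\mathrm{avg}}(R)$ on the cuboid $R$ well enough that one may treat the average-minimisation and the Riesz-mean-minimisation problems as interchangeable. The clean way around this is not to minimise $\lambda$ first at all, but to use the two-sided bound: on the one hand, for the extremal average-minimiser and for $\lambda = \pi^{-2}\text{(something comparable to }\lambda_k(\,\overline{\!R}_k^\optD))$ one has $k\lambda+\Tr(-\Delta^\optD_{\,\overline{\!R}_k^\optD}-\lambda)_\limminus \leq \sum_{i=1}^k\lambda_i(Q) + k\lambda'+\Tr(-\Delta^\optD_Q-\lambda')_\limminus$ for a suitable $\lambda'$; on the other hand the trivial lower bound $\sum_{i=1}^k\lambda_i(R)\geq k\lambda - \Tr$ evaluated at a common $\lambda$. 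Choosing that common $\lambda$ to be the value $\lambda_k^*$ associated with the cube (which is known asymptotically from Theorem~\ref{T2}) decouples the two problems and reduces everything to the counting-function comparison one already controls. Once this bookkeeping is set up, no genuinely new estimate is needed beyond what appears in Sections~\ref{upperbound}--\ref{count}.
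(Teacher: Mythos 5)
Your high-level strategy is the same as the paper's: pass between the eigenvalue average and the Riesz mean of order $\gamma=1$ via Legendre duality, transfer the uniform boundedness, and then close with the perimeter/AM--GM argument. The paper packages this as Lemma~\ref{lem:Dirichlet_average}, which bakes the Legendre transform into a clean three-term lower bound for $\frac1k\sum_{i=1}^k\lambda_i(R)$ obtained by evaluating the supremum at a specific $\lambda$, rather than carrying $\sup_\lambda$ around throughout, which is a cleaner route to the same end. However there are two concrete problems with your writeup.

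First, the Legendre identity you state has the wrong signs. You wrote
\begin{equation}
  \sum_{i=1}^k\lambda_i(R) = \inf_{\lambda\geq 0}\Bigl(k\lambda + \Tr(-\Delta^\optD_R-\lambda)_\limminus\Bigr),
\end{equation}
but since $k\lambda+\sum_i(\lambda-\lambda_i(R))_\limplus$ is nonnegative, increasing in $\lambda$, and vanishes at $\lambda=0$, the right-hand side equals $0$. The correct identity, which the paper uses in the proof of Lemma~\ref{lem:Dirichlet_average}, is
\begin{equation}
  \sum_{i=1}^k\lambda_i(R) = \sup_{\lambda\geq 0}\Bigl(k\lambda - \Tr(-\Delta^\optD_R-\lambda)_\limminus\Bigr),
\end{equation}
with the supremum attained on $[\lambda_k(R),\lambda_{k+1}(R)]$. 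Your later sentence about the trivial lower bound $\sum_i\lambda_i(R)\geq k\lambda-\Tr(\cdots)_\limminus$ already uses the correct sign, so this is an error you would quickly catch, but as written the central reduction is false and the two-sided comparison you sketch with it goes the wrong way.

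Second, and more substantively, you claim that uniform boundedness of $\,\overline{\!R}^\optD_k$ follows from Lemma~\ref{lem:D_3term_bound} ``after lifting to Riesz means, as in Section~\ref{Sums_Means},'' but this skips a genuine step. The Legendre-transformed bound (Lemma~\ref{lem:Dirichlet_average}) has a \emph{negative} third term $-c_2 b^2/a_1^2$ with no factor of $k^{1/n}$, so one cannot absorb it into the positive second term $c_1 b\,k^{1/n}/a_1$ unless one first knows that $a_1^{-1}=O(k^{1/n})$; otherwise the argument that bounded $a_1$ from below in Section~\ref{boundedness} breaks down. The paper supplies exactly this missing ingredient by the elementary chain $k\pi^2/\bar a_{1,k}^2\leq k\lambda_1(\,\overline{\!R}^\optD_k)\leq\sum_{i=1}^k\lambda_i(\,\overline{\!R}^\optD_k)\leq\sum_{i=1}^k\lambda_i(Q)$, from which $\frac{1}{\bar a_{1,k}k^{1/n}}=O(1)$. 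Your proposal does not mention this a priori bound, and without it the uniform-boundedness step is incomplete.

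Once these two points are repaired, the remainder of your outline (uniform two-term Riesz-mean asymptotics from Lemma~\ref{lem:Asymptotic Expansion} via Aizenman--Lieb, comparison with $Q$, perimeter bound, AM--GM with quadratic vanishing giving the exponent $(\theta_n-(n-1))/(2n)$) matches what the paper does, though the paper prefers to pass to asymptotics of $\frac1k\sum_i\lambda_i$ directly (e.g.\ via Euler--Maclaurin) rather than re-extracting them from the Riesz-mean expansion through the $R$-dependent Legendre parameter.
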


We believe that the corresponding result should also hold for the maximisation of the Neumann averages. However, we have been unable to solve an issue which appears when trying to pass from a bound for the Riesz means to a bound for the averages (see Remark~\ref{rem:Neumann averages} below).

In a similar manner as in Section~\ref{sec:Eigenvalues of cuboids} above (see also~\cite{SL}), one can conclude that for any fixed $\lambda, \mu$ or $k\in \N$ each of these problems has at least one optimal cuboid. We denote any such optimal cuboid by $R_\lambda^\optD, R_\mu^\optN$ and $\,\overline{\!R}_{k}^{\optD}$, respectively, where the bar is to distinguish from the minimisers of the individual eigenvalues.

The approach we take for~\ref{prob:RieszDN} 
is to use the Aizenman--Lieb Identity to lift our bounds for the counting functions to higher order Riesz means. For $\gamma\geq1$ this improves special cases of a pair of inequalities due to Berezin~\cite{Berezin} (see also~\cite{Lap}).
For~\ref{prob:averageD}
we use an approach based on the close relationship between the sum of eigenvalues and the Riesz means of order $\gamma=1$. This allows us to obtain a three-term bound for the sum of the first $k$ eigenvalues, which improves a special case of a bound obtained by Li and Yau~\cite{LY} (see Lemma~\ref{lem:Dirichlet_average} below).

\begin{lemma}\label{lem:3_term_bound_RieszD}
  Let $\gamma \geq 0$. There exist positive constants $c_1, c_2$ and $b_0$ such that, for any cuboid $R\subset \R^n$ with $|R|=1$, the bound
  \begin{equation}
    \Tr(-\Delta_R^\mathcal{D}-\lambda)^\gamma_\limminus \leq \LT{\gamma}{n}\lambda^{\gamma+n/2}-\frac{c_1 b \LT{\gamma}{n-1}}{a_1}\lambda^{\gamma+(n-1)/2}+ \frac{c_2 b^2 \LT{\gamma}{n-2}}{a_1^2}\lambda^{\gamma+(n-2)/2},
  \end{equation}
  holds for all $\lambda\geq 0$ and $b\in \bigl[0, b_0\bigr]$.
\end{lemma}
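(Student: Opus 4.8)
The plan is to obtain Lemma~\ref{lem:3_term_bound_RieszD} from Lemma~\ref{lem:D_3term_bound} by applying the Aizenman--Lieb Identity~\eqref{eq:AizenmanLieb} with $\gamma_1 = 0$ and $\gamma_2 = \gamma$. Indeed, since $\Tr(-\Delta_R^\mathcal{D}-\eta)_\limminus^0 = N^\optD(\eta, R)$, the identity gives, for $\gamma > 0$,
\begin{equation}
  \Tr(-\Delta_R^\mathcal{D}-\lambda)_\limminus^\gamma = B(1, \gamma)^{-1}\int_0^\lambda \tau^{\gamma-1} N^\optD(\lambda-\tau, R)\,d\tau = \gamma \int_0^\lambda \tau^{\gamma-1}N^\optD(\lambda-\tau, R)\,d\tau,
\end{equation}
using $B(1,\gamma) = 1/\gamma$, and the integral runs only up to $\lambda$ because $N^\optD(s, R) = 0$ for $s < 0$. (The case $\gamma = 0$ is simply Lemma~\ref{lem:D_3term_bound} itself.) First I would insert the three-term bound from Lemma~\ref{lem:D_3term_bound} into the integrand, valid for all $\lambda - \tau \geq 0$ and every fixed $b \in [0, b_0]$, and then integrate term by term.

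The key computation is the elementary integral $\int_0^\lambda \tau^{\gamma-1}(\lambda-\tau)^{\alpha}\,d\tau = \lambda^{\gamma+\alpha}B(\gamma, \alpha+1)$ for $\alpha \in \{n/2, (n-1)/2, (n-2)/2\}$. Carrying this out gives
\begin{equation}
  \Tr(-\Delta_R^\mathcal{D}-\lambda)_\limminus^\gamma \leq \gamma\Bigl[\LT{0}{n}B\bigl(\gamma,\tfrac n2+1\bigr)\lambda^{\gamma+n/2} - \frac{c_1 b\LT{0}{n-1}}{a_1}B\bigl(\gamma,\tfrac{n-1}{2}+1\bigr)\lambda^{\gamma+(n-1)/2} + \frac{c_2 b^2\LT{0}{n-2}}{a_1^2}B\bigl(\gamma,\tfrac{n-2}{2}+1\bigr)\lambda^{\gamma+(n-2)/2}\Bigr].
\end{equation}
It then remains to check the constant-matching identity $\gamma\,\LT{0}{m}B(\gamma,\tfrac m2+1) = \LT{\gamma}{m}$ for each $m \in \{n, n-1, n-2\}$; this follows directly from the definitions of $\LT{\gamma}{m}$ and $B$ together with $\Gamma(\gamma+1) = \gamma\Gamma(\gamma)$ and $B(\gamma, \tfrac m2+1) = \Gamma(\gamma)\Gamma(\tfrac m2+1)/\Gamma(\gamma+\tfrac m2+1)$. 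The new first two constants are thus exactly $\LT{\gamma}{n}$ and $c_1 b\LT{\gamma}{n-1}/a_1$ as claimed, and the third term's constant is $c_2 b^2 \LT{\gamma}{n-2}/a_1^2$; one may keep the same $c_1, c_2, b_0$ as in Lemma~\ref{lem:D_3term_bound}.

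I do not anticipate a genuine obstacle here: the argument is a clean application of the Aizenman--Lieb Identity to a bound that already holds pointwise in $\lambda$, which is precisely the reason the remark following Lemma~\ref{lem:N_2term_bound} emphasises wanting the counting-function bounds valid for all $\lambda \geq 0$. The only points requiring a small amount of care are (i) confirming that Lemma~\ref{lem:D_3term_bound} may legitimately be applied under the integral sign for every value of $\lambda - \tau \in [0,\lambda]$ with a fixed choice of $b$, which is immediate since the bound is uniform in $\lambda$; and (ii) the bookkeeping in the Beta-function identities, which is routine. One should also note briefly that the resulting bound is again monotone decreasing in $b$ on $[0, b_0]$ for $b_0$ small (inherited from the $\gamma = 0$ case, since each Beta-function weight is positive), so the statement "for all $b \in [0, b_0]$" is preserved.
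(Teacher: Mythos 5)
Your proof is correct and follows exactly the paper's route: apply the Aizenman--Lieb identity with $\gamma_1=0$, $\gamma_2=\gamma$ to the three-term counting-function bound of Lemma~\ref{lem:D_3term_bound} and verify the Beta-function bookkeeping $\gamma\,\LT{0}{m}B(\gamma,\tfrac m2+1)=\LT{\gamma}{m}$. The closing remark about monotonicity in $b$ is superfluous --- since you fix $b\in[0,b_0]$ at the outset and the counting-function bound already holds for every such $b$, the conclusion for all $b\in[0,b_0]$ is immediate --- but it is harmless.
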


\begin{lemma}\label{lem:2_term_bound_RieszN}
  Let $\gamma \geq 0$. There exists $c_1>0$ such that, for any cuboid $R\subset \R^n$ with $|R|=1$, the bound
  \begin{equation}
    \Tr(-\Delta_R^\mathcal{N}-\mu)^\gamma_\limminus \geq \LT{\gamma}{n}\mu^{\gamma+n/2}+\frac{c_1\LT{\gamma}{n-1}}{a_1}\mu^{\gamma+(n-1)/2},
  \end{equation}
  holds for all $\mu\geq 0$.
\end{lemma}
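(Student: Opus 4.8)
The plan is to lift the two-term lower bound for the Neumann counting function (Lemma~\ref{lem:N_2term_bound}) to the Riesz mean $\Tr(-\Delta_R^\mathcal{N}-\mu)^\gamma_\limminus$ by means of the Aizenman--Lieb Identity~\eqref{eq:AizenmanLieb} with the choice $\gamma_1 = 0$ and $\gamma_2 = \gamma$. Concretely, since $N^\optN(\cdot, R) = \Tr(-\Delta^\mathcal{N}_R - \cdot)^0_\limminus$, the identity gives, for $\gamma > 0$,
\begin{equation}
  \Tr(-\Delta^\mathcal{N}_R-\mu)^\gamma_\limminus = B(1, \gamma)^{-1} \int_0^\infty \tau^{\gamma-1} N^\optN((\mu-\tau)_\limplus, R)\, d\tau = \gamma\int_0^\mu \tau^{\gamma-1} N^\optN(\mu-\tau, R)\, d\tau,
\end{equation}
using $B(1,\gamma) = 1/\gamma$. (For $\gamma = 0$ there is nothing to prove, as the statement is exactly Lemma~\ref{lem:N_2term_bound}.) Then I would insert the pointwise lower bound $N^\optN(s, R)\geq \LT{0}{n}s^{n/2} + c_1\LT{0}{n-1}a_1^{-1}s^{(n-1)/2}$, valid for all $s\geq 0$, under the integral sign — this is where it is crucial that Lemma~\ref{lem:N_2term_bound} holds for \emph{all} $\mu\geq 0$, so that no lower cutoff on $\mu-\tau$ is needed and the integral splits cleanly.

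The remaining step is a pair of Beta-function computations. Using the substitution $\tau = \mu t$ one has, for any $\alpha > 0$,
\begin{equation}
  \gamma\int_0^\mu \tau^{\gamma-1}(\mu-\tau)^{\alpha/2}\, d\tau = \gamma\, \mu^{\gamma+\alpha/2} B\Bigl(\gamma, \frac{\alpha}{2}+1\Bigr) = \mu^{\gamma+\alpha/2}\,\frac{\Gamma(\gamma+1)\Gamma(\frac{\alpha}{2}+1)}{\Gamma(\gamma+\frac{\alpha}{2}+1)}.
\end{equation}
Applying this with $\alpha = n$ to the first term and $\alpha = n-1$ to the second, and recalling the definition $\LT{\gamma}{m} = \Gamma(\gamma+1)/((4\pi)^{m/2}\Gamma(\gamma+\frac m2+1))$, I would check that the Gamma-factor bookkeeping produces exactly $\LT{\gamma}{n}\mu^{\gamma+n/2}$ from the leading term and $c_1\LT{\gamma}{n-1}a_1^{-1}\mu^{\gamma+(n-1)/2}$ from the second, \emph{with the same constant $c_1$} as in Lemma~\ref{lem:N_2term_bound}. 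The identity $\LT{0}{m}\,\Gamma(\frac m2+1)\,\Gamma(\gamma+1)/\Gamma(\gamma+\frac m2+1) = \LT{\gamma}{m}$ (which follows since $\LT{0}{m} = 1/((4\pi)^{m/2}\Gamma(\frac m2+1))$) makes this match automatic for both $m=n$ and $m=n-1$.

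The only mildly delicate point — and hence the "main obstacle", though it is a minor one — is justifying the interchange of the integral with the pointwise inequality and confirming convergence of the integrals: since $N^\optN((\mu-\tau)_\limplus, R)$ is bounded on $[0,\mu]$ and vanishes for $\tau \geq \mu$, and $\tau^{\gamma-1}$ is integrable near $0$ for $\gamma > 0$, everything is finite and the monotonicity of the integral applies directly. One should also note the derivation of the $\gamma_1 = 0$ case of~\eqref{eq:AizenmanLieb} is the elementary one already recorded in the proof of Lemmas~\ref{lem:D_3term_bound} and~\ref{lem:N_2term_bound}, so no new input is required. I would then remark that, exactly as for Lemma~\ref{lem:D_3term_bound} versus Lemma~\ref{lem:3_term_bound_RieszD}, the companion upper bound Lemma~\ref{lem:3_term_bound_RieszD} is obtained in the same way from Lemma~\ref{lem:D_3term_bound}, using~\eqref{eq:AizenmanLieb} with $\gamma_1 = 0$, $\gamma_2 = \gamma$ term by term and noting that for $b\leq b_0$ (with $b_0\leq\pi$, say) the three-term bound is genuinely an upper bound on $N^\optD(\cdot, R)$ for every nonnegative argument, so the same integral computation goes through with $\alpha = n, n-1, n-2$.
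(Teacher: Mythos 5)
Your proposal is correct and takes exactly the approach of the paper: apply the Aizenman--Lieb Identity~\eqref{eq:AizenmanLieb} with $\gamma_1=0$ and $\gamma_2=\gamma$ to the counting-function bound of Lemma~\ref{lem:N_2term_bound}, using that the bound holds for all $\mu\geq 0$ so the integration requires no cutoff. Your Beta-function bookkeeping confirming $\LT{0}{m}\,\Gamma(\gamma+1)\Gamma(\frac m2+1)/\Gamma(\gamma+\frac m2+1)=\LT{\gamma}{m}$ and the preservation of the constant $c_1$ is precisely the verification the paper leaves implicit.
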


\begin{proof}[Proof of Lemmas~\ref{lem:3_term_bound_RieszD} and~\ref{lem:2_term_bound_RieszN}]
  Applying the Aizenman--Lieb Identity~\eqref{eq:AizenmanLieb} with $\gamma_1=0$ and $\gamma_2=\gamma$ to both sides of Lemma~\ref{lem:D_3term_bound}, respectively Lemma~\ref{lem:N_2term_bound}, yields the result.
\end{proof}

We note that by using the Laplace transform instead of the Aizenman--Lieb Identity, one could apply the above procedure to obtain a three-term bound for $\Tr(e^{t\Delta_R^{\optD/\optN}})$ valid for all cuboids $R\subset \R^n$. Moreover, using Theorem~1.1 of~\cite{SLpams} one can obtain a tunable three-term bound (similar to Lemma~\ref{lem:3_term_bound_RieszD}) for any convex domain $\Omega \subset \R^n$ which could then, using the Laplace transform, be lifted to a corresponding bound for $\Tr(e^{t\Delta_\Omega^\optD})$. A similar inequality was obtained by van den Berg in~\cite{vdBerg} for the Dirichlet Laplacian on smooth convex domains. By using results from~\cite{SLjfa}, the upper bound of~\cite{vdBerg} can be extended to all convex domains.
\begin{lemma}\label{lem:Dirichlet_average}
  There exist positive constants $c_1, c_2$ and $b_0$ such that, for any cuboid $R\subset \R^n$ with $|R|=1$, the bound
  \begin{equation}
    \frac{1}{k}\sum_{i=1}^k \lambda_i(R) \geq \frac{4\pi n \Gamma(\frac{n}{2}+1)^{2/n}}{n+2}k^{2/n}+ \frac{c_1 b}{a_1}k^{1/n} - \frac{c_2 b^2}{a_1^2},
  \end{equation}
  holds for all $k\in \N$ and all $b\in \bigl[0, b_0\bigr]$.
\end{lemma}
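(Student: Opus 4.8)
The plan is to exploit the standard Legendre-type duality between the partial sum of eigenvalues and the Riesz mean of order $\gamma=1$. First I would note that for any $\lambda\geq 0$,
\[
  \sum_{i=1}^k\lambda_i(R)=k\lambda+\sum_{i=1}^k\bigl(\lambda_i(R)-\lambda\bigr)\geq k\lambda-\sum_{i=1}^k\bigl(\lambda-\lambda_i(R)\bigr)_\limplus\geq k\lambda-\Tr(-\Delta_R^\mathcal{D}-\lambda)_\limminus^{1},
\]
where the first inequality uses $\lambda_i-\lambda\geq-(\lambda-\lambda_i)_\limplus$ and the second extends the finite sum to the full (non-negative) series. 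Dividing by $k$ gives, for every $\lambda\geq0$,
\[
  \frac1k\sum_{i=1}^k\lambda_i(R)\geq\lambda-\frac1k\Tr(-\Delta_R^\mathcal{D}-\lambda)_\limminus^{1}.
\]

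Next I would feed in the upper bound of Lemma~\ref{lem:3_term_bound_RieszD} with $\gamma=1$, so that for all $\lambda\geq0$ and $b\in[0,b_0]$,
\[
  \frac1k\sum_{i=1}^k\lambda_i(R)\geq\lambda-\frac{\LT{1}{n}}{k}\lambda^{1+n/2}+\frac{c_1b\LT{1}{n-1}}{k\,a_1}\lambda^{(n+1)/2}-\frac{c_2b^2\LT{1}{n-2}}{k\,a_1^2}\lambda^{n/2}.
\]
Then I would choose $\lambda$ to maximise only the first pair of terms, i.e.\ the stationary point of $\lambda\mapsto\lambda-\tfrac{\LT{1}{n}}{k}\lambda^{1+n/2}$, which, using the explicit value $\LT{1}{n}=\tfrac{2}{(n+2)(4\pi)^{n/2}\Gamma(n/2+1)}$, works out to $\lambda=4\pi\Gamma(\tfrac n2+1)^{2/n}k^{2/n}$. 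The point of optimising only the leading pair rather than the whole right-hand side is that this $\lambda$ then depends on $k$ alone, which keeps the two remaining terms clean functions of $k$, $a_1$ and $b$.

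Finally, inserting this $\lambda$: the first two terms collapse to $\tfrac{n}{n+2}\lambda=\tfrac{4\pi n\Gamma(n/2+1)^{2/n}}{n+2}k^{2/n}$; the $b$-linear term equals a positive constant (assembled from $c_1$, $\LT{1}{n-1}$ and a power of $4\pi\Gamma(\tfrac n2+1)^{2/n}$) times $\tfrac{b}{a_1}k^{1/n}$, since $\lambda^{(n+1)/2}/k$ scales like $k^{1/n}$; and the $b^2$ term equals a negative constant times $\tfrac{b^2}{a_1^2}$, since $\lambda^{n/2}/k$ is of order $k^0$. Relabelling the two positive constants as $c_1,c_2$ and keeping the same $b_0$ as in Lemma~\ref{lem:3_term_bound_RieszD} yields the claim; it holds for all $k\in\N$ because the duality inequality and Lemma~\ref{lem:3_term_bound_RieszD} are valid for every $\lambda\geq0$ and every $k$. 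I do not expect a genuine obstacle here: the only delicate points are the arithmetic with the explicit value of $\LT{1}{n}$, so that the stationary $\lambda$ reproduces exactly the advertised leading constant $\tfrac{4\pi n\Gamma(n/2+1)^{2/n}}{n+2}$, and checking that $\lambda^{(n+1)/2}/k$ and $\lambda^{n/2}/k$ carry the exponents $k^{1/n}$ and $k^{0}$ respectively. (Taking $b=0$ recovers the Li--Yau bound, so the lemma can be read as a refinement of it.)
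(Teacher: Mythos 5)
Your proof is correct and takes essentially the same approach as the paper: both use the Legendre-type duality $\sum_{i=1}^k\lambda_i(R)\geq k\lambda-\Tr(-\Delta_R^\mathcal{D}-\lambda)_\limminus^1$, feed in Lemma~\ref{lem:3_term_bound_RieszD} with $\gamma=1$, and choose $\lambda=4\pi\Gamma(\tfrac n2+1)^{2/n}k^{2/n}$ to optimise only the leading pair of terms. The only difference is presentational — the paper phrases the left-hand side as a supremum and notes it is attained at $\lambda=\lambda_k(R)$, whereas you simply use the one-sided inequality for a fixed $\lambda$, which is all that is needed.
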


\begin{proof}[Proof of Lemma~\ref{lem:Dirichlet_average}]
  It is well known that the sum of eigenvalues and the order $1$ Riesz means are related by the Legendre transform~\cite{Lap}. It is a small modification of this insight that will allow us to obtain the claimed bound from Lemma~\ref{lem:3_term_bound_RieszD} with $\gamma=1$.

  By Lemma~\ref{lem:3_term_bound_RieszD} there exist constants $c_1', c_2'>0$ such that, for any $k \in \N$,
  \begin{align}\label{eq:legendre_transform}
    \sup_{\lambda \geq 0} \Bigl( k\lambda -\sum_{i: \lambda_i\leq \lambda} (\lambda-\lambda_i(R))\Bigr)
    &\geq
    \sup_{\lambda \geq 0} \Bigl( k\lambda - \LT{1}{n}\lambda^{1+n/2}+\frac{c_1' b \LT{1}{n-1}}{a_1}\lambda^{1+(n-1)/2}\notag\\
    & \qquad- \frac{c_2' b^2 \LT{1}{n-2}}{a_1^2}\lambda^{1+(n-2)/2}\Bigr).
  \end{align}
  The supremum on the left-hand side is achieved precisely at $\lambda = \lambda_k(R)$. Indeed, the function $f_k(\lambda)=k \lambda -\sum_{i: \lambda_i\leq \lambda} (\lambda-\lambda_i(R))$ is continuous, increasing for all $\lambda$ for which $N(\lambda, R)<k$, and decreasing if $N(\lambda, R)>k$. Moreover, for $\lambda$ such that $N(\lambda, R)=k$ we have that $f_k(\lambda)=\sum_{i=1}^k \lambda_i(R)$. Thus the left-hand side reduces to
  \begin{equation}
     \sup_{\lambda \geq 0} \Bigl( k\lambda -\sum_{i: \lambda_i\leq \lambda} (\lambda-\lambda_i(R))\Bigr) = \sum_{i=1}^k \lambda_i(R).
  \end{equation}

  On the other hand, maximising the right-hand side of the inequality is slightly more difficult and there may also be a question of uniqueness of the maximum. However, on this side we may choose any $\lambda \geq 0$ and still obtain a valid inequality.

  Choosing $\lambda$ to maximise $k\lambda - \LT{1}{n}\lambda^{1+n/2}$, which corresponds to
  \begin{equation}
    \lambda = \biggl(\frac{k}{(\frac{n}2+1)\LT{1}{n}}\biggr)^{2/n} = 4\pi \Gamma\Bigl(\frac{n}{2}+1\Bigr)^{2/n}k^{2/n},
  \end{equation}
  ensures that the leading order term has the sharp constant (this follows from the equivalence, via the Legendre transform, of the Li--Yau inequality for the sum of eigenvalues and the Berezin inequality for the Riesz mean of order $\gamma=1$, see~\cite{Lap}). With the above choice of $\lambda$ we obtain the claimed bound from~\eqref{eq:legendre_transform}.
\end{proof}

\begin{remark}\label{rem:Neumann averages}
If one attempts to apply the same technique as above to obtain a lower bound for the average of the Neumann eigenvalues from Lemma~\ref{lem:2_term_bound_RieszN}, the inequality after the Legendre transform is reversed. Therefore one cannot pick $\mu$ analogously to how we chose $\lambda$ above. Instead one needs to prove an upper bound for
\begin{equation}
  \sup_{\mu \geq 0} \Bigl(k\mu - \LT{1}{n}\mu^{1+n/2}-\frac{c_1\LT{1}{n-1}}{a_1}\mu^{1+(n-1)/2}\Bigr),
\end{equation}
which is sufficiently good to obtain the uniform boundedness of the extremal cuboids.
\end{remark}

\subsection{{Proof of Propositions~\ref{prop:Riesz means Dirichlet}--\pps\ref{prop:Extremal averages}}}

With the above bounds in hand, and almost step-by-step following the proof in Section~\ref{boundedness}, or the corresponding proof in~\cite{SL}, one obtains that $R^\optD_\lambda, R^\optN_\mu$ and $\,\overline{\!R}^\optD_k$ are uniformly bounded as $\lambda, \mu$ or $k$ goes to infinity.

For the Riesz means, in both the Dirichlet case and the Neumann case, the proof is completely analogous to that in Section~\ref{boundedness} by using Lemmas~\ref{lem:3_term_bound_RieszD} and~\ref{lem:2_term_bound_RieszN} and
the asymptotic expansions one obtains from Lemma~\ref{lem:Asymptotic Expansion} via the Aizenman--Lieb Identity.

For the eigenvalue averages we require an upper bound for $\frac{1}{\overline{a}_{1, k}k^{1/n}}$, which can be obtained as follows.
Since $\,\overline{\!R}^\optD_k$ is a minimiser, we have that
\begin{align}
  \frac{k\pi^2}{\overline{a}_{1, k}^2} \leq k\lambda_1(\,\overline{\!R}^\optD_k)\leq \sum_{i=1}^k \lambda_i(\,\overline{\!R}^\optD_k)\leq \sum_{i=1}^k \lambda_i(Q).
\end{align}
Inserting that, as $k\to \infty$,
\begin{equation}
\sum_{i=1}^k\lambda_i(Q)= \frac{4\pi n \Gamma(\frac{n}{2}+1)^{2/n}}{n+2}k^{1+2/n}
+ \frac{2\pi \Gamma(\frac{n}{2}+1)^{1+1/n}}{(n+1)\Gamma(\frac{n+1}{2})}|\partial Q|k^{1+1/n} + o(k^{1+1/n})
\end{equation}
and rearranging implies the required bound.

To find an asymptotic expansion for the eigenvalue averages, one can make use of the corresponding two-term expansions that we have for $\lambda_i(R)$ and calculate the asymptotics of the resulting sums (for instance using the Euler--Maclaurin formula).

In a similar manner as in the preceding section, for these problems one could also obtain estimates for the spectral stability, i.e.\ to what order in the respective parameters do the extremal eigenvalue means or averages approach those of the limiting domain $Q$. However, by finer analysis of the asymptotics, and not lifting the results for the counting function, it should be possible to obtain sharper estimates than what is obtained directly by the method outlined in the previous paragraph. This is due to the fact that in the above problems the erratic behaviour of the eigenvalues and counting function has in some sense been reduced by summing.

It is possible to analyse the asymptotic behaviour of the extremal averages of the first $k$ Neumann eigenvalues amongst unit-measure cuboids by invoking Theorem~\ref{T2}. Indeed, by using that
\begin{equation}
  \frac{1}{k}\sum_{i=0}^k \mu_i(Q) \leq \sup\Bigl\{\frac{1}{k}\sum_{i=0}^k \mu_i(R): R\subset \R^n \mbox{ cuboid}, |R|=1\Bigr\} \leq \frac{1}{k}\sum_{i=0}^k \mu_i^\optN
\end{equation}
and Theorem~\ref{T2}, one obtains precise two-term asymptotics for the extremal averages, and finds that they agree with the corresponding asymptotics for $Q$. However, as mentioned above we have been unable to obtain an inequality which is sharp enough to conclude that the sequence of extremal cuboids for this problem remains uniformly bounded as $k \to \infty$. Thus our approach yields nothing about the geometric convergence.

\medskip\noindent{\bf Acknowledgements.} KG was supported by the Swiss National Science Foundation grant no.\ 200021\_163228 entitled \emph{Geometric Spectral Theory}.
SL acknowledges support from the Swedish Research Council grant no.\ 2012-3864, and also wishes to thank Bruno Colbois for the invitation to visit the Universit\'e de Neuch\^atel. The authors wish to thank the referee for
his/her very helpful comments. They also extend their thanks to
Sinan Ariturk, Michiel van den Berg, Pedro Freitas and Richard Laugesen for encouraging discussions, as well as the organisers of the conference \emph{Shape Optimization and Isoperimetric and Functional Inequalities}, CIRM, where this work was initiated.

\appendix
\section{One-dimensional bounds}\label{Appendix}

\begin{lemma}\label{lem:1D-Dirichlet bound}
There exist constants $c_1, c_2, b_0>0$ such that, for all $\lambda\geq 0$ and $a>0$,
\begin{equation*}
  \Tr(-\Delta^\mathcal{D}_{(0, a)}-\lambda)_\limminus = \sum_{k\geq 1}\Bigl(\lambda-\frac{\pi^2k^2}{a^2}\Bigr)_\limplus \leq a\LT{1}{1} \lambda^{3/2} - b c_1\LT{1}{0} \lambda + \frac{ b^2c_2}{a} \LT{1}{-1}\lambda^{1/2},
\end{equation*}
for all $b \in [0, b_0]$.
\end{lemma}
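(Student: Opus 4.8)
The plan is to reduce the one-dimensional Dirichlet counting estimate to a single comparison inequality and then tune the parameter $b$, mirroring the structure already used for the two-dimensional Dirichlet reduction in the proof of Lemmas~\ref{lem:D_3term_bound} and~\ref{lem:N_2term_bound}. First I would recall the exact value of the one-dimensional Riesz mean: writing $\lambda = \pi^2 s^2/a^2$ for $s\geq 0$ we have
\begin{equation}
  \Tr(-\Delta^{\mathcal D}_{(0,a)}-\lambda)_\limminus = \sum_{k\geq 1}\Bigl(\lambda - \frac{\pi^2 k^2}{a^2}\Bigr)_\limplus = \frac{\pi^2}{a^2}\sum_{1\leq k\leq s}(s^2-k^2),
\end{equation}
so everything is scale-covariant and it suffices to understand the function $s\mapsto \sum_{1\leq k\leq s}(s^2-k^2)$. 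The semiclassical leading term is $a\LT{1}{1}\lambda^{3/2} = \tfrac{2}{3\pi}\cdot\tfrac{\pi^2}{a^2}\cdot\tfrac{a^3}{\pi^3}\lambda^{3/2}\cdot(\pi/a)$—in the $s$ variable this is $\tfrac{\pi^2}{a^2}\cdot\tfrac{2}{3}s^3$, i.e. $\tfrac{2}{3}s^3 \approx \int_0^s (s^2-t^2)\,dt$. So the whole lemma amounts to the purely elementary estimate
\begin{equation}\label{eq:1d-elementary}
  \sum_{1\leq k\leq s}(s^2-k^2) \leq \frac{2}{3}s^3 - c_1 b\, s^2 + c_2 b^2 s, \qquad s\geq 0,\ b\in[0,b_0],
\end{equation}
after translating constants ($\LT{1}{0} = \tfrac{1}{4}$, $\LT{1}{-1} = \tfrac{\Gamma(2)}{(4\pi)^{-1/2}\Gamma(3/2)} = \tfrac{4}{\sqrt\pi}$, etc., the exact numerical values being irrelevant since the $c_i$ are free).

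The key step is to prove \eqref{eq:1d-elementary} first for $b = b_0$ fixed and then observe monotonicity in $b$. For the $b$-monotonicity: the right-hand side of \eqref{eq:1d-elementary}, as a function of $b$ with $s$ fixed, is a downward parabola $\tfrac23 s^3 - c_1 b s^2 + c_2 b^2 s$; its derivative in $b$ is $-c_1 s^2 + 2c_2 b s$, which is $\leq 0$ provided $b\leq \tfrac{c_1}{2c_2}s$. Since $s\geq 1$ whenever the left side of \eqref{eq:1d-elementary} is nonzero (the sum is empty for $s<1$), it suffices to take $b_0 \leq \tfrac{c_1}{2c_2}$; for $s<1$ both sides are handled trivially as the left side vanishes and the right side is positive once $b_0$ is small (the parabola in $b$ at $s$ close to $0$ is dominated by the $\tfrac23 s^3\geq 0$ term). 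So it is enough to establish the inequality for a single sufficiently small value $b=b_0$.

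For that single value, I would use the classical two-term bound for the one-dimensional sum with explicit error control. The cleanest route is Euler--Maclaurin or direct summation: with $m = \lfloor s\rfloor$,
\begin{equation}
  \sum_{k=1}^{m}(s^2-k^2) = m s^2 - \frac{m(m+1)(2m+1)}{6},
\end{equation}
and comparing with $\tfrac23 s^3$ using $m = s - \{s\}$ one gets $\sum_{k\leq s}(s^2-k^2) = \tfrac23 s^3 - \tfrac12 s^2 + O(s)$ with an explicit, uniformly bounded $O(s)$ coefficient (indeed one can check the $s^2$-coefficient is exactly $-\tfrac12$ by a standard computation, or cite that this is the one-dimensional Weyl expansion with the known boundary term). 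Thus choosing $c_1$ so that $c_1 b_0 < \tfrac12$ (possible since $b_0$ is at our disposal), the gap $\tfrac12 s^2 - c_1 b_0 s^2 = (\tfrac12 - c_1 b_0)s^2$ is a positive multiple of $s^2$, which dominates the $O(s)$ error for all $s$ large enough; for the finitely many remaining small $s$ (equivalently, small $m$) one enlarges $c_2$ to absorb the discrepancy, using that the linear term $c_2 b_0^2 s$ is then at our disposal and $s\geq 1$ on the support. Reassembling and undoing the scaling $\lambda = \pi^2 s^2/a^2$ restores exactly the claimed form with the factors $1/a$, $1/a$ appearing in the second and third terms as required.

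The main obstacle, such as it is, is bookkeeping rather than mathematics: one must make the choices of $c_1$, $c_2$, $b_0$ in the right order—first fix $b_0$ small enough that (i) the $b$-monotonicity holds on $s\geq 1$ and (ii) $c_1 b_0 < \tfrac12$ so the second-order term is not over-subtracted, then choose $c_2$ large enough to absorb the bounded linear error uniformly down to $s=1$—and verify that no choice is circular. There is also the minor point that the error term in the one-dimensional expansion must genuinely be $O(s)$ \emph{uniformly}, with no hidden dependence blowing up as $s\to\infty$; this follows from the exact formula for $\sum_{k=1}^m k^2$ above, so there is no real difficulty there.
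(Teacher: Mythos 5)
Your proposal follows essentially the same strategy as the paper's proof: rescale to normalise the interval length, compute the one-dimensional Riesz mean exactly in terms of the fractional part of $\sqrt{\lambda}$ (equivalently of $s$), and reduce the desired bound to a polynomial comparison combined with a monotonicity-in-$b$ argument valid on $s\geq 1$. The paper is slightly cleaner in its bookkeeping because it aims directly for the form $\tfrac{2}{3}\lambda^{1/2}(\sqrt{\lambda}-b)^2$, which is manifestly nonnegative for all $\lambda\geq 0$ and manifestly decreasing in $b$ on $b\leq\sqrt{\lambda}$, so the monotonicity constraint is simply $b_0\leq 1$ and does not entangle $b_0$ with the constants $c_1,c_2$.

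Two minor slips are worth flagging. First, your proposed order of choices -- fix $b_0$ small so that the $b$-monotonicity holds, then choose $c_2$ large -- is circular, because your monotonicity condition reads $b_0\leq\frac{c_1}{2c_2}$, which already involves $c_2$; you cannot leave $c_2$ free to be enlarged afterwards. The fix is to fix $c_1,c_2$ first (the paper's choice corresponds to completing the square as $\tfrac{2}{3}s(s-b)^2$) and only then shrink $b_0$; moreover, once $b_0$ is small, the quadratic gap $(\tfrac12-c_1 b_0)s^2$ already dominates the linear error, which in the exact expansion is bounded by $\tfrac{s}{12}+\tfrac{1}{36\sqrt3}$, uniformly on $s\geq 1$, so no enlargement of $c_2$ is actually required. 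Second, the right-hand side of your comparison inequality, viewed as a function of $b$ for fixed $s$, is an upward-opening parabola (leading coefficient $c_2 s>0$), not a downward one; your derivative computation is nevertheless correct, so this is only a slip of the pen.
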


\begin{lemma}\label{lem:1D-Neumann bound}
  There exists $c_1>0$ such that, for all $\mu\geq 0$ and $a>0$,
  \begin{equation*}
    \Tr(-\Delta^{\mathcal{N}}_{(0, a)}-\mu)_\limminus= \sum_{k\geq 0}\Bigl(\mu-\frac{\pi^2k^2}{a^2}\Bigr)_\limplus \geq a \LT{1}{1} \mu^{3/2} + c_1\LT{1}{0}\mu.
  \end{equation*}
\end{lemma}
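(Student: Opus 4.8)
The plan is to reduce both estimates to a single scaling-invariant inequality and then prove that inequality by elementary comparison of a sum with an integral, using the Euler--Maclaurin formula (or a direct convexity argument) on the function $k\mapsto (\mu - \pi^2 k^2/a^2)_\limplus$. First I would rescale: writing $s = a\sqrt\lambda/\pi$ (respectively $s = a\sqrt\mu/\pi$), both sums become $\pi^2 a^{-2}\sum_{k}(s^2 - k^2)_\limplus$, over $k\geq 1$ in the Dirichlet case and $k\geq 0$ in the Neumann case. Since $\LT11 = \frac{1}{6\pi}$, $\LT10 = \frac14$ (up to the exact constants coming from the definition $\LT{\gamma}m = \Gamma(\gamma+1)/((4\pi)^{m/2}\Gamma(\gamma+m/2+1))$), and $\LT1{-1}$ is a positive constant, the claimed bounds are, after clearing the common factor $\pi^2 a^{-2}$, equivalent to two-sided estimates of the form
\begin{equation}
  \sum_{k\geq 1}(s^2-k^2)_\limplus \leq \tfrac23 s^3 - \kappa_1 b s + \kappa_2 b^2 s, \qquad \sum_{k\geq 0}(s^2-k^2)_\limplus \geq \tfrac23 s^3 + \kappa_3 s,
\end{equation}
valid for all $s\geq 0$ and $b\in[0,b_0]$, with explicit positive constants $\kappa_i$; the point is that the main term $\frac23 s^3 = \int_0^s (s^2-t^2)\,dt$ is exactly the integral, so everything reduces to controlling the difference between the Riemann sum and the integral.

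Next I would carry out that comparison. For the Neumann bound, since $(s^2-t^2)_\limplus$ is decreasing in $t$ on $[0,\infty)$, the left-endpoint rule overestimates the integral: $\sum_{k\geq 0}(s^2-k^2)_\limplus \geq \int_0^\infty (s^2-t^2)_\limplus\,dt + \tfrac12 s^2$ would be too crude for a $+s$ term, so instead I would use Euler--Maclaurin to second order, or simply note $\sum_{k\geq 0} g(k) \geq \int_0^\infty g + \tfrac12 g(0) - \tfrac1{12}\sup|g''|$-type bounds adapted to the fact that $g(t) = (s^2-t^2)_\limplus$ is concave on $[0,s]$; concavity gives $\sum_{k=0}^{\lfloor s\rfloor} g(k) \geq \int_0^{s} g(t)\,dt$ directly via the trapezoidal/midpoint comparison, and the boundary contribution $g(0) = s^2$ together with a careful treatment of the last partial interval near $t=s$ yields the required $+\,c_1 s$ after one checks the worst case over the fractional part of $s$. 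For the Dirichlet bound one drops the $k=0$ term, which costs exactly $s^2$, and then needs the sum over $k\geq 1$ to lie below $\frac23 s^3 - (\text{positive})\cdot s$; here the tunable parameter $b$ and the quadratic-in-$b$ correction term $+\kappa_2 b^2 s$ give exactly the slack needed so that, for $b$ small, the inequality holds for every $s\geq 0$ including the regime $s = O(1)$ where the sum is small or empty (it is trivially true once the right-hand side is nonnegative, i.e. for $s\lesssim b$, and for larger $s$ one uses the integral comparison plus the loss of the $k=0$ term).

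The main obstacle I anticipate is not the asymptotic regime $s\to\infty$, where Euler--Maclaurin gives the two-term expansion $\sum_{k}(s^2-k^2)_\limplus = \frac23 s^3 \mp \tfrac12 s^2 + O(1)$ cleanly, but rather making the inequalities hold \emph{uniformly for all $s\geq 0$ and $a>0$}, with no lower cutoff on $\lambda$ or $\mu$. The delicate point is the behaviour for small and moderate $s$, and the dependence of the sum on the fractional part $\{s\}$: one must check that the discrete-minus-continuous defect $\sum_k (s^2-k^2)_\limplus - \frac23 s^3$ stays below $-\kappa_1 s$ (Dirichlet, after removing $k=0$) respectively above $\kappa_3 s$ (Neumann) for every $s$, which amounts to a one-variable optimisation over $s\in[0,\infty)$ that I would handle by splitting into $s$ in a bounded interval (direct estimate, choosing $b_0$ and the constants accordingly) and $s$ large (Euler--Maclaurin with controlled remainder). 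Choosing $b_0$ sufficiently small and the constants $c_1,c_2$ appropriately then closes the argument; the scaling-invariance established in the first step guarantees that the resulting bounds hold for all $a>0$ simultaneously.
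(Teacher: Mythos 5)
Your overall strategy --- scale out $a$, compare $\sum_{k\geq 0}(s^2-k^2)_\limplus$ with the integral $\int_0^s(s^2-t^2)\,dt=\tfrac23 s^3$, control the defect uniformly over the fractional part of $s$, and split into small and large $s$ --- is the same as the paper's. But there is a concrete error in your reduction. With $s=a\sqrt\mu/\pi$ one has $\mu=\pi^2 s^2/a^2$ (and $\LT{1}{0}=1$), so $c_1\LT{1}{0}\mu=\tfrac{\pi^2}{a^2}\,c_1 s^2$; clearing the common factor $\pi^2/a^2$ therefore leaves the target
\begin{equation*}
  \sum_{k\geq 0}(s^2-k^2)_\limplus \;\geq\; \tfrac23 s^3 + c_1 s^2,
\end{equation*}
i.e.\ the correction term must be of order $s^2$, not $s$. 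Your stated target $\tfrac23 s^3+\kappa_3 s$ is strictly weaker for $s\geq 1$, so establishing it would not prove the lemma. The same power mismatch affects the middle term of your reduced Dirichlet bound, which should be $\kappa_1 b\,s^2$; and you in fact quote the correct two-term expansion $\tfrac23 s^3+\tfrac12 s^2+O(1)$ a few lines later, which should have flagged the inconsistency.

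Once the target is corrected, the Euler--Maclaurin route you favour does work, but in effect by reproducing the paper's exact calculation: because $t\mapsto(\mu-t^2)_\limplus$ is piecewise quadratic, one has the closed form
\begin{equation*}
  \sum_{k\geq 0}(\mu-k^2)_\limplus=\tfrac23\mu^{3/2}+\tfrac12\mu+\bigl(r-r^2-\tfrac16\bigr)\mu^{1/2}+\tfrac16(r-3r^2+2r^3),\qquad r=\sqrt\mu-\lfloor\sqrt\mu\rfloor,
\end{equation*}
and the paper simply minimises the $\mu^{1/2}$-coefficient and the constant term over $r\in[0,1)$ to obtain $\sum\geq\tfrac23\mu^{3/2}+\tfrac12\mu-\tfrac16\mu^{1/2}-\tfrac1{36\sqrt3}$, which dominates $\tfrac23\mu^{3/2}+c_1\mu$ for $\mu\geq1$ precisely when $c_1\leq\tfrac{36-\sqrt3}{108}$, while for $\mu\in[0,1)$ the inequality reduces to $\mu\geq\tfrac23\mu^{3/2}+c_1\mu$, which follows from convexity of the right-hand side and checking the endpoints. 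The softer midpoint/concavity estimates you float as an alternative are unlikely to close the moderate-$s$ regime without effectively recovering this exact expression, so I would go directly to it.
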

\begin{remark}
  For our purposes it is essential that the leading order term agrees with the asymptotic one. The lower order terms are of less importance up to their behaviour in $\lambda$ and $a$. However, in the Dirichlet case it is important that the third term can be dominated by the second one by choosing $b$ sufficiently small.

  We also emphasise that when applying the Aizenman--Lieb Identity~\eqref{eq:AizenmanLieb} it simplifies matters if we have bounds valid for all $\lambda, \mu\geq 0$. This is the reason for proving the above inequalities for $\lambda, \mu \geq 0$ even though our main interest here is focused on large $\lambda, \mu$.
\end{remark}

\begin{proof}[Proof of Lemma~\ref{lem:1D-Dirichlet bound}]
  By rescaling it suffices to prove that, for $\lambda\geq 0$ and small enough $b$,
  \begin{equation*}
    \sum_{k\geq 1}(\lambda-k^2)_\limplus \leq \frac{2}{3}\lambda^{3/2}-b c_1 \lambda+\frac{4b^2 c_2}{\pi}\lambda^{1/2}.
  \end{equation*}
  We will prove this with $c_1=\frac{4}{3}$, $c_2=\frac{\pi}{6}$ and $b \leq 1- \frac{1}{6}\sqrt{\frac{27+\sqrt{3}}{2}}.$

  With $r=\sqrt{\lambda}-\lfloor \sqrt{\lambda}\rfloor$ we have that
  \begin{equation*}
    \sum_{k\geq 1}(\lambda-k^2)_\limplus = \frac{2}{3}\lambda^{3/2}-\frac{\lambda}{2}+\Bigl(r-r^2-\frac{1}{6}\Bigr)\lambda^{1/2}+ \frac{1}{6}(r-3r^2+2r^3).
  \end{equation*}
  Maximising the coefficient in front of $\lambda^{1/2}$ and the constant term with respect to $r\in [0, 1)$ we obtain
  \begin{equation}\label{eq:large_lambda}
    \sum_{k\geq 1}(\lambda-k^2)_\limplus
    \leq
     \frac{2}{3}\lambda^{3/2}-\frac{\lambda}{2}+\frac{\lambda^{1/2}}{12}+ \frac{1}{36\sqrt{3}}.
  \end{equation}

  We aim for a bound of the form $\sum_{k\geq 1}(\lambda-k^2)_\limplus\leq \frac{2}{3}\lambda^{1/2} (\sqrt{\lambda}-b)^2$, which holds for all $\lambda\geq 0$ and some $b>0$. Note that this bound holds trivially for all $\lambda \leq 1$, and thus we only need to choose $b$ so that it is valid for all $\lambda > 1$. Moreover, note that, for $b<1$ and $\lambda > 1$, this bound is pointwise decreasing in $b$. Hence if we know the bound to hold for some $b_0$ then it holds for all $0\leq b \leq b_0$.

  Since we have an upper bound in terms of the polynomial in~\eqref{eq:large_lambda}, it suffices to choose $b$ so that, for all $\lambda > 1$,
  \begin{align*}
  \lambda^{3/2}- \frac{3}{4}\lambda+\frac{\lambda^{1/2}}{8} 
  + \frac{1}{24\sqrt{3}} \leq \lambda^{1/2} (\sqrt{\lambda}-b)^2 = \lambda^{3/2}- 2b \lambda+b^2\lambda^{1/2}.
  \end{align*}
  Rearranging we see that this is equivalent to
  \begin{equation*}
    \Bigl(\frac{1}{8}-b^2\Bigr)\lambda^{1/2}+ \frac{1}{24\sqrt{3}} \leq \Bigl(\frac{3}{4}-2b\Bigr)\lambda,
  \end{equation*}
  and thus we must choose $b< 3/8$. If this is true then, since $\lambda > 1$,
  \begin{equation*}
    \Bigl(\frac{3}{4}-2b\Bigr)\lambda \geq \Bigl(\frac{3}{4}-2b\Bigr)\lambda^{1/2}.
  \end{equation*}
  Thus it is sufficient to choose $b$ satisfying
  \begin{equation*}
     \Bigl(\frac{1}{8}-b^2\Bigr)\lambda^{1/2}+ \frac{1}{24\sqrt{3}} \leq \Bigl(\frac{3}{4}-2b\Bigr)\lambda^{1/2},
  \end{equation*}
  or equivalently so that
  \begin{equation*}
    \frac{1}{24\sqrt{3}}\leq \Bigl(\frac{5}{8}-2b+b^2\Bigr)\lambda^{1/2}.
  \end{equation*}
  This holds for all $\lambda >1$ if and only if the inequality is valid at $\lambda=1$. Thus we can choose $b\in [0, b_0]$ with $b_0=1- \frac{1}{6}\sqrt{\frac{27  + \sqrt{3}}{2}}<\frac{3}{8}.$
\end{proof}

\begin{proof}[Proof of Lemma~\ref{lem:1D-Neumann bound}]
  We shall prove that the claimed bound holds if and only if $c_1\leq\frac{36-\sqrt{3}}{108}$. By scaling it is sufficient to prove that
  \begin{equation}\label{eA.2N}
    \sum_{k\geq 0} (\mu-k^2)_\limplus \geq \frac{2}{3}\mu^{3/2}+c_1 \mu.
  \end{equation}
  Analogously to the Dirichlet case above
  \begin{equation*}
    \sum_{k\geq 0} (\mu-k^2)_\limplus = \frac{2}{3}\mu^{3/2}+\frac{\mu}{2}+\Bigl(r-r^2-\frac{1}{6}\Bigr)\mu^{1/2}+ \frac{1}{6}(r-3r^2+2r^3),
  \end{equation*}
  where $r:=\sqrt{\mu}-\lfloor \sqrt{\mu}\rfloor$. Minimising the coefficient in front of $\mu^{1/2}$ and the constant term with respect to $r\in [0, 1)$, we find that
  \begin{equation*}
    \sum_{k\geq 0} (\mu-k^2)_\limplus \geq \frac{2}{3}\mu^{3/2}+\frac{\mu}{2}-\frac{\mu^{1/2}}{6}-\frac{1}{36\sqrt{3}}.
  \end{equation*}
  For $\mu\geq 1$ it is easy to prove that
  \begin{equation*}
    \frac{2}{3}\mu^{3/2}+\frac{\mu}{2}-\frac{\mu^{1/2}}{6}-\frac{1}{36\sqrt{3}}\geq \frac{2}{3}\mu^{3/2}+c_1 \mu,
  \end{equation*}
  if and only if $c_1 \leq \frac{36-\sqrt{3}}{108}$.

  What remains is to prove that the bound is valid for $\mu \in [0, 1)$. In this range the inequality~\eqref{eA.2N} reduces to
  \begin{equation*}
    \mu \geq \frac{2}{3}\mu^{3/2}+c_1 \mu.
  \end{equation*}
  As the right-hand side is strictly convex and the bound is valid at $\mu=0$ and $\mu=1$ the proof is complete.
\end{proof}

% ------------------  Bibliography  --------------------

\medskip
%References
\bibliographystyle{amsplain}

\end{document}